\title{Geometry of diagonal-effect models for contingency tables}
\author{Cristiano Bocci \\ Department of Mathematics and Computer Science, University of Siena \\
Pian dei Mantellini, 44 \\ 53100 Siena, Italy \\
\tt{bocci24@unisi.it}
\and Enrico Carlini \\ Dipartimento di Matematica, Politecnico di Torino \\
Corso Duca degli Abruzzi, 24 \\
10129 Torino, Italy \\
\tt{enrico.carlini@polito.it}
\and Fabio Rapallo \\ Department DISTA, University of Eastern Piedmont \\
Viale Teresa Michel, 11 \\
15121 Alessandria, Italy \\ \tt{fabio.rapallo@mfn.unipmn.it} }
\begin{document}

\newtheorem{theorem}{Theorem}[section]
\newtheorem{definition}[theorem]{Definition}
\newtheorem{proposition}[theorem]{Proposition}
\newtheorem{lemma}[theorem]{Lemma}
\newtheorem{corollary}[theorem]{Corollary}
\newtheorem{remark}[theorem]{Remark}
\newtheorem{example}[theorem]{Example}

\maketitle                 

\begin{abstract}
In this work we study several types of diagonal-effect models for
two-way contingency tables in the framework of Algebraic
Statistics. We use both toric models and mixture models to encode
the different behavior of the diagonal cells. We compute the
invariants of these models and we explore their geometrical
structure.
\bigskip

{\it Key words: toric models; mixture models; invariants; Markov
bases}
\end{abstract}

\section{Introduction} \label{intro}

A probability distribution on a finite sample space ${\mathcal X}$
with $k$ elements is a normalized vector of $k$ non-negative real
numbers. Thus, the most general probability model is the simplex
\begin{equation} \label{gen-simplex}
\Delta = \left\{(p_{1}, \ldots, p_{k}) \ : \ p_{i} \geq 0 \ , \
\sum_{i=1}^k p_{i} = 1   \right\} \, .
\end{equation}
A statistical model ${\mathcal M}$ is therefore a subset of
$\Delta$.

A classical example of finite sample space is the case of two-way
contingency tables, where the sample space is usually written as a
cartesian product of the form ${\mathcal X}=\{1, \ldots , I\}
\times \{1 , \ldots, J \}$. We will consider this case extensively
in the next sections.

When ${\mathcal M}$ is defined through algebraic equations, the
model ${\mathcal M}$ is said to be an algebraic model. In such a
case, algebraic and geometric techniques are useful to study the
structure of the model and many statistical quantities such as
sufficient statistics and maximum likelihood estimators. In recent
literature this approach is known as ``Algebraic Statistics''. For
a survey on this field the reader can refer to
\cite{pistone|riccomagno|wynn:01} and \cite{pachter|sturmfels:05}.

With this point of view, a statistical model is defined as the set
of points in $\Delta$ where certain polynomials $f_1(p_1, \ldots,
p_k), \ldots, f_\ell(p_1, \ldots , p_k)$ vanish. Notice that the
non-negativity and normalization conditions increase the
complexity of the geometrical study. In fact, Algebraic Geometry
usually works in complex projective spaces, see e.g.
\cite{harris:92}, while in Algebraic Statistics we have to
consider a real affine variety and we must intersect the variety
with the simplex.

A description of the problems raised by non-negativity and
normalization are described for instance in
\cite{pistone|riccomagno|wynn:01ams} and
\cite{geiger|meek|sturmfels:06}, while the use of Algebraic
Statistics in various applications is presented in
\cite{riccomagno:09}.

The difference between models with positive probabilities and
models with non-negative probabilities has been deeply studied in
Algebraic Statistics. When only positive probabilities are
involved usually one takes the log-probabilities $\log(p_1),
\ldots, \log(p_k)$. Hence, many statistical models are defined by
linear equations in the log-probabilities. The most widely used
models for contingency tables are defined in this way and are
called log-linear models, see \cite{agresti:02}. The use of
polynomial algebra instead of linear algebra has led to the study
of models with non-negative probabilities. Therefore, the new
class of toric models has been introduced and many geometric
properties have been related to statistical properties, see e.g.
\cite{geiger|meek|sturmfels:06}. Toric models generalize the
log-linear models to include models with structural zeros, see
e.g. \cite{rapallo:07}. Exact inference on toric model can be done
through MCMC methods based on Markov bases. The difference between
positivity and non-negativity is also a major issue in the
computation of Markov bases, see \cite{diaconis|sturmfels:98} and
\cite{chen|dinwoodie|dobra|huber:05} where the difference between
lattice bases and Markov bases is shown to be essential.

In this paper we consider the diagonal-effect models, i.e., models
encoding a special behavior of the diagonal cells of the table
with respect to the independence model. It is a class of
statistical models for square two-way contingency tables with a
wide range of applications, from social mobility analysis in
Psychometry to rater agreement analysis in medical and
pharmaceutical sciences, see e.g. \cite{agresti:92},
\cite{schuster:02}. Some results in Algebraic Statistics for this
kind of models have already been discussed in \cite{rapallo:05},
\cite{carlini|rapallo:08} and \cite{krampe|kuhnt:07}. Due to the
variety of the applications, this type of statistical models has
been approached in many different ways and several mathematical
definitions have been introduced, often to describe the same
objects. In this paper, we will concentrate especially on toric
models and mixture models.

The main aim of this paper is to study the geometric structure of
the diagonal-effect models, showing the differences between toric
models and mixture models. In particular, we compute the
invariants of these models. We recall that an invariant of a model
is a polynomial function vanishing in the points of the model, see
\cite{garcia|stillman|sturmfels:05}. We show that the toric and
mixture models differ not only on the boundary of the simplex but
also in its interior, also when the models have the same
invariants.

In Section \ref{recalls} we recall some basic definitions and
results on toric models, with special emphasis on the independence
model. In Section \ref{diageffsect} we define the diagonal-effect
models as both toric models and mixture models, we show that they
have the same invariants, and we describe their structure, while
in Section \ref{geo-desc} we study in more details their geometry.
Finally, in Section \ref{commdiagsect} we study a special class of
diagonal effect models which encodes a common behavior of the
diagonal cells, i.e., all diagonal cells give the same
contribution.

The results presented here also suggest future works on these
topics as such as: the comparison of two or more diagonal effect
models; the study of the geometry of more complex models, such as
diagonal models for multi-way tables or non-square tables; a
better understanding of the notion of maximum likelihood estimates
for this kind of models.

\section{Basic facts on toric models and independence} \label{recalls}

In this paper we consider a two-way contingency table as the joint
observed counts of two categorical random variables $X$ and $Y$.
Let us suppose that the random variable $X$ has $I$ levels, and
$Y$ has $J$ levels. Therefore, the sample space is the cartesian
product ${\mathcal X}=\{1, \ldots, I\} \times \{1, \ldots, J\}$
and the observed contingency table is a point $f \in {\mathbb
N}^{IJ}$.

A probability distribution for an $I \times J$ contingency table
is a matrix $P=(p_{i,j})$ such that $p_{i,j}= {\mathbb P}(X=i,
Y=j)$. Clearly the matrix $P$ is such that $p_{i,j} \geq 0$ for
all $i=1, \ldots, I$ and $j=1, \ldots J$, and $\sum_{i,j}
p_{i,j}=1$. In other words, the matrix $P$ is a point of the
closed simplex
\begin{equation} \label{closedsimplex}
\Delta = \left\{P=(p_{i,j}) \ : \ p_{i,j} \geq 0 \ , \ \sum_{i,j}
p_{i,j} = 1   \right\}\, .
\end{equation}

A statistical model is a subset of $\Delta$. In most cases, the
statistical model is defined through algebraic equations and the
model is said to be algebraic. A wide class of algebraic
statistical models is the class of toric models, see
\cite{pistone|riccomagno|wynn:01},
\cite{pistone|riccomagno|wynn:01ams} and \cite{rapallo:07}.

In a toric model, the raw probabilities of the cells are defined
in parametric form as power products through a map $\phi: {\mathbb
R}_{\geq 0}^s \rightarrow {\mathbb R}_{\geq 0}^{IJ}$:
\begin{equation} \label{partoric}
p_{i,j} = \prod_{h=1}^s \zeta_h^{A_{(i,j),h}}  \, .
\end{equation}
Therefore, the structure of the toric model is encoded in an $IJ
\times s$ non-negative integer matrix $A$ which extends $\phi$ to
a vector space homomorphism, see
\cite{pistone|riccomagno|wynn:01}, Chapter $6$. Notice that, in
the open simplex
\begin{equation} \label{opensimplex}
\Delta_{> 0} = \left\{P=(p_{i,j}) \ : \ p_{i,j} > 0 \ , \
\sum_{i,j} p_{i,j} = 1   \right\}
\end{equation}
the power product representation leads to a vector-space
representation by taking the log-probabilities. Moreover, it is
known that eliminating the $\zeta$ parameters from Equations in
\eqref{partoric} one obtains the toric ideal ${\mathcal I}_A$
associated to the statistical model. The ideal ${\mathcal I}_A$ is
a polynomial ideal in the ring ${\mathbb R}[p] = {\mathbb
R}[p_{1,1}, \ldots , p_{I,J}]$ generated by pure binomials. We
recall that a binomial $p^a-p^b$ is pure if
$\mathrm{gcd}(p^a,p^b)=1$. The notation $p^{a}-p^{b}$ is a vector
notation for $\prod_{i,j} p_{i,j}^{a_{i,j}} - \prod_{i,j}
p_{i,j}^{b_{i,j}}$.

A move for the toric model defined by the matrix $A$ is a table $m
\in {\mathbb Z}^{IJ}$ with integer entries such that $A^t(m)=0$.
The move $m$ is represented in the ring ${\mathbb R}[p]$ by the
pure binomial $p^{m+}-p^{m-}$, where $m+$ and $m-$ are the
positive and negative part of $m$.

A Markov basis for the statistical toric model defined by the
matrix $A$ is a finite set of tables $m_1, \ldots, m_\ell \in
{\mathbb Z}^{IJ}$ that connects any two contingency tables $f_1$
and $f_2$ in the same fiber, i.e. such that $A^t(f_1)=A^t(f_2)$,
with a path of elements of the fiber. The path is therefore formed
by tables of non-negative counts with constant image under $A^t$.

The relation between the notion of Markov basis and the toric
ideal ${\mathcal I}_A$ is given in the theorem below.

\begin{theorem}[\cite{diaconis|sturmfels:98}, Theorem 3.1] \label{theoldtheo}
The set of moves $\{m_1, \ldots, m_\ell\}$ is a Markov basis if
and only if the set $\{p^{m_1+}-p^{m_1-}, \ i=1 , \ldots , \ell
\}$ generates the ideal ${\mathcal I}_A$.
\end{theorem}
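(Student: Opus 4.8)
The plan is to prove the equivalence by tying both conditions to a single algebraic object, namely the ideal $J=\langle p^{m_1+}-p^{m_1-},\ldots,p^{m_\ell+}-p^{m_\ell-}\rangle$ generated by the moves. Since each move satisfies $A^t(m_i)=0$, every generator already lies in $\mathcal I_A$, so the inclusion $J\subseteq\mathcal I_A$ holds unconditionally; the real content is that the equality $J=\mathcal I_A$ is equivalent to the connectivity of all fibers. The one structural fact I would invoke throughout is the standard description of toric ideals: $\mathcal I_A$ is spanned, as a real vector space, by the fiber-binomials $p^u-p^v$ with $u,v\in\mathbb N^{IJ}$ and $A^t(u)=A^t(v)$. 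Equivalently, for $u,v\in\mathbb N^{IJ}$ one has $p^u-p^v\in\mathcal I_A$ precisely when $u$ and $v$ lie in the same fiber.

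For the implication ``Markov basis $\Rightarrow$ generators'' I would start from an arbitrary fiber-binomial $p^u-p^v\in\mathcal I_A$ and use the Markov property to produce a path $u=w_0,w_1,\dots,w_N=v$ in $\mathbb N^{IJ}$ with $w_{k+1}-w_k=\pm m_{i_k}$ for each $k$. Writing $g_k\in\mathbb N^{IJ}$ for the componentwise minimum of $w_k$ and $w_{k+1}$, each step factors as $p^{w_k}-p^{w_{k+1}}=\pm\, p^{g_k}\bigl(p^{m_{i_k}+}-p^{m_{i_k}-}\bigr)\in J$, so the telescoping identity $p^u-p^v=\sum_{k}\bigl(p^{w_k}-p^{w_{k+1}}\bigr)$ shows $p^u-p^v\in J$. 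Since such fiber-binomials span $\mathcal I_A$, this yields $\mathcal I_A\subseteq J$, hence $J=\mathcal I_A$.

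The reverse implication ``generators $\Rightarrow$ Markov basis'' is the delicate one, and the key device I would use is a linear map recording connected components. Let $V$ be the real vector space with one basis vector $e_C$ for each connected component $C$ (under the moves $\pm m_i$, through nonnegative tables) of each fiber, and define $\varphi\colon\mathbb R[p]\to V$ on monomials by $\varphi(p^u)=e_{C(u)}$, where $C(u)$ is the component containing $u$. A direct check on the spanning set $\bigl\{p^h(p^{m_i+}-p^{m_i-})\bigr\}$ of $J$ shows $\varphi(J)=0$: the tables $h+m_i+$ and $h+m_i-$ lie in the same fiber and differ by the single move $m_i$, so $\varphi$ sends both monomials to the same basis vector. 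Thus $\varphi$ factors through $\mathbb R[p]/J$. If $J=\mathcal I_A$, then $\varphi$ also annihilates $\mathcal I_A$, so for any two tables $u,v$ in the same fiber we obtain $e_{C(u)}-e_{C(v)}=\varphi(p^u-p^v)=0$, forcing $C(u)=C(v)$. Hence each fiber consists of a single component, i.e.\ the moves form a Markov basis.

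I expect the main obstacle to be precisely this converse. A relation $p^u-p^v=\sum_i c_i(p^{m_i+}-p^{m_i-})$ only certifies algebraic membership and, because of cancellation among the terms and the absence of any control on the intermediate monomials, does not by itself exhibit a path through \emph{nonnegative} tables. A naive leading-term induction would require the generators to form a Gröbner basis, which is not assumed here. The component map $\varphi$ is what circumvents this difficulty: it replaces the combinatorial path-construction by a single linear-algebra statement, using only that $\varphi$ annihilates $J$ and that same-fiber monomials are identified modulo $\mathcal I_A$.
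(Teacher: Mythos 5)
The paper does not actually prove this statement: it is quoted verbatim from Diaconis and Sturmfels (1998, Theorem 3.1), so there is no internal proof to compare against. Judged on its own, your argument is correct. The forward direction is the standard telescoping argument: the factorization $p^{w_k}-p^{w_{k+1}}=\pm\,p^{g_k}\bigl(p^{m_{i_k}+}-p^{m_{i_k}-}\bigr)$ is right (note $w_k-g_k=m_{i_k}-$ and $w_{k+1}-g_k=m_{i_k}+$), and the nonnegativity of the intermediate tables, guaranteed by the Markov property, is exactly what makes $p^{g_k}$ a genuine monomial. The one external fact you lean on --- that $\mathcal I_A$ is spanned as a vector space by the fiber binomials $p^u-p^v$ with $A^t(u)=A^t(v)$ --- is standard (Sturmfels, \emph{Gr\"obner Bases and Convex Polytopes}, Lemma 4.1) and is legitimately a prerequisite rather than a gap. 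For the converse, your component map $\varphi$ is a genuinely different, and cleaner, device than the one in the cited source: Diaconis and Sturmfels argue by taking a representation $p^u-p^v=\sum_k \pm p^{h_k}(p^{m_{i_k}+}-p^{m_{i_k}-})$, matching the monomial $p^u$ against some term, stepping to an adjacent table, and inducting on the representation, which requires some care about cancellation and termination. Your linear functional onto the free vector space on connected components annihilates $J$ by inspection of the monomial spanning set $p^h(p^{m_i+}-p^{m_i-})$ (both $h+m_i+$ and $h+m_i-$ are nonnegative and one move apart), and then equality $J=\mathcal I_A$ immediately forces $C(u)=C(v)$ for same-fiber tables. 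This trades the combinatorial path-chasing for a one-line linear-algebra conclusion, at the mild cost of introducing the auxiliary space $V$; both routes are valid, and yours correctly identifies and sidesteps the real difficulty, namely that ideal membership alone gives no control over intermediate monomials.
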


In many applications this theorem has been used in its ``if'' part
to deduce Markov bases from the computation of a system of
generators of a toric ideal, see e.g. \cite{rapallo:03} and
\cite{chen|dinwoodie|sullivant:06}. On the contrary, in the next
section we will make use of Theorem \ref{theoldtheo} in its ``only
if'' implication.

In this paper the independence model will play a special role. It
can be considered as the simplest toric model. The variables $X$
and $Y$ are independent if ${\mathbb P}(X=i, Y=j)={\mathbb
P}(X=i){\mathbb P}(Y=j)$, i.e., the joint distribution is the
product of the marginal distributions. The independence condition
can be written as:
\begin{equation} \label{indep1}
p_{i,j}=\zeta^{(r)}_i \zeta^{(c)}_j \ \ \ \mbox{ for all } i,j
\end{equation}
for suitable $\zeta^{(r)}_i$'s and $\zeta^{(c)}_j$'s. The
non-negativity constraint reflects into non-negativity of the
parameters. Namely, we suppose $\zeta^{(r)}_i \geq 0$ for all
$i=1, \ldots , I$, and $\zeta^{(c)}_j \geq 0$ for all $j=1, \ldots
J$. Using Equation \eqref{partoric}, the independence model is
then defined as the set
\begin{equation}
{\mathcal M} = \{P=(p_{i,j}) \ : \ p_{i,j}=\zeta^{(r)}_i
\zeta^{(c)}_j  \ , \ 1 \leq i \leq I, \ 1 \leq j \leq J \} \cap
\Delta \, ,
\end{equation}
for non-negative $\zeta^{(r)}_i$'s and $\zeta^{(c)}_j$'s.

Notice that Equation \eqref{indep1} implies that the matrix $P$
has rank $1$ and therefore a probability matrix $P$ in the
independence model must have all $2 \times 2$ minors equal to
zero. In formulae, it is therefore easy to write the independence
model in implicit form as:
\begin{equation} \label{minors}
{\mathcal M}' = \{P=(p_{i,j}) \ : \ p_{i,j}p_{k,h}-p_{i,h}p_{k,j}
= 0 \ , \ 1 \leq i < k \leq I, \ 1 \leq j < h \leq J \} \cap
\Delta \, .
\end{equation}
In \cite{diaconis|sturmfels:98}, the authors have studied this set
to find Markov bases for the independence model, while the
corresponding polynomial ideal has been considered in Algebraic
Geometry in the framework of determinantal ideals, see
\cite{hosten|sullivant:04}.

As the independence model is toric, Lemma $2$ in \cite{rapallo:07}
says that the model ${\mathcal M}$ in parametric form and the
corresponding model ${\mathcal M'}$ in implicit form coincide in
the open simplex $\Delta_{> 0}$.

\begin{proposition} \label{proprank1}
With the notation above, in $\Delta_{> 0}$ we have that ${\mathcal
M} = {\mathcal M}'$.
\end{proposition}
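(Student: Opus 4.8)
The plan is to prove the two inclusions $\mathcal{M} \subseteq \mathcal{M}'$ and $\mathcal{M}' \subseteq \mathcal{M}$ separately, with essentially all the content residing in the second one and in the essential use of strict positivity.

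First I would dispose of the easy inclusion $\mathcal{M} \subseteq \mathcal{M}'$. If $P \in \mathcal{M}$, so that $p_{i,j} = \zeta^{(r)}_i \zeta^{(c)}_j$ for suitable non-negative parameters, then for any $1 \leq i < k \leq I$ and $1 \leq j < h \leq J$ a direct substitution gives
\[
p_{i,j}p_{k,h} - p_{i,h}p_{k,j} = \zeta^{(r)}_i \zeta^{(c)}_j \zeta^{(r)}_k \zeta^{(c)}_h - \zeta^{(r)}_i \zeta^{(c)}_h \zeta^{(r)}_k \zeta^{(c)}_j = 0 \, ,
\]
so every $2 \times 2$ minor vanishes and $P \in \mathcal{M}'$. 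This inclusion needs no positivity and in fact holds on all of $\Delta$.

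The substantive direction is $\mathcal{M}' \subseteq \mathcal{M}$, and here I would exploit positivity to reconstruct the parameters explicitly. Let $P \in \mathcal{M}' \cap \Delta_{>0}$, so that every $p_{i,j} > 0$ and every $2 \times 2$ minor vanishes. Fixing the first row and taking the minor on rows $1,i$ and columns $j,h$, the relation $p_{1,j}p_{i,h} = p_{1,h}p_{i,j}$ yields
\[
\frac{p_{i,j}}{p_{1,j}} = \frac{p_{i,h}}{p_{1,h}} \, ,
\]
so the ratio $r_i := p_{i,j}/p_{1,j}$ does not depend on the column index $j$ (and $r_1 = 1$). Setting $\zeta^{(r)}_i := r_i$ and $\zeta^{(c)}_j := p_{1,j}$ then gives $p_{i,j} = \zeta^{(r)}_i \zeta^{(c)}_j$ for all $i,j$, with all parameters strictly positive; since $P$ already lies in $\Delta$ by hypothesis, we conclude $P \in \mathcal{M}$.

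The point where positivity is indispensable --- and the step I would flag as the crux --- is the division by $p_{1,j}$: the construction of the ratios $r_i$, and indeed the fact that vanishing of all $2 \times 2$ minors forces a rank-one factorization with parameters of a single sign, relies on every entry of $P$ being nonzero. On the boundary of the simplex this argument breaks down, which is precisely why the statement is confined to $\Delta_{>0}$. I would note finally that the whole proposition is a concrete instance of the general phenomenon for toric models recalled above, namely Lemma $2$ of \cite{rapallo:07}, so one could alternatively deduce it directly from that result.
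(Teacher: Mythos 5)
Your proof is correct, but it takes a genuinely different route from the paper's. The paper passes to log-probabilities: in $\Delta_{>0}$ the parametric description becomes the image of a linear map and the minor equations become linear relations, so both sets are affine-linear in $\log p$, and the argument is a dimension count showing the two resulting subspaces coincide (together with the easy inclusion, which the paper leaves implicit). You instead reconstruct the toric parameters explicitly: the vanishing of the minors on rows $1,i$ forces the ratios $p_{i,j}/p_{1,j}$ to be column-independent, and setting $\zeta^{(r)}_i = p_{i,j}/p_{1,j}$, $\zeta^{(c)}_j = p_{1,j}$ exhibits the factorization directly. Your version is more elementary and self-contained --- it produces the parameters rather than inferring existence from equal dimensions --- and it isolates exactly where positivity enters (the division by $p_{1,j}$), whereas the paper's log-linear argument uses positivity globally just to take logarithms and fits the proposition into the general toric-model framework (indeed the paper explicitly presents the proposition as an instance of Lemma 2 of \cite{rapallo:07}, the alternative you mention at the end). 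Both arguments are sound; yours trades generality for transparency.
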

\begin{proof}
Using the same notation as above, consider the sets
\begin{equation*}
\{P=(p_{i,j}) \ : \ p_{i,j}=\zeta^{(r)}_i \zeta^{(c)}_j  \ , \ 1
\leq i \leq I, \ 1 \leq j \leq J \}
\end{equation*}
and \begin{equation*} \{P=(p_{i,j}) \ : \
p_{i,j}p_{k,h}-p_{i,h}p_{k,j} = 0 \ , \ 1 \leq i < k \leq I, \ 1
\leq j < h \leq J \} \, .
\end{equation*}
Taking the log-probabilities, both sets are defined as a linear
system and it is immediate to show that they define two vector
sub-spaces with the same dimension.
\end{proof}

It is known that ${\mathcal M}$ and ${\mathcal M}'$ are in general
different on the boundary $\Delta \setminus \Delta_{>0}$. A
complete description of this issue can be found in Section 4 of
\cite{rapallo:07}.

\section{Diagonal-effect models} \label{diageffsect}

As mentioned in the Introduction, diagonal-effect models for
square $I \times I$ tables can be defined in at least two ways. In
the field of toric models, one can define these models in monomial
form as follows.

\begin{definition} \label{diag-toric}
The diagonal-effect model ${\mathcal M}_1$ is defined as the set
of probability matrices $P \in \Delta$ such that:
\begin{equation} \label{toric-def-1}
p_{i,j} = \zeta^{(r)}_i \zeta^{(c)}_j \ \mbox{ for } \ i \ne j
\end{equation}
and
\begin{equation} \label{toric-def-2}
p_{i,j} = \zeta^{(r)}_i \zeta^{(c)}_j\zeta^{(\gamma)}_i \ \mbox{
for } \ i = j
\end{equation}
where $\zeta^{(r)}$, $\zeta^{(c)}$ and $\zeta^{(\gamma)}$ are
non-negative vectors with length $I$.
\end{definition}

In literature, such a model is also known as quasi-independence
model, see \cite{agresti:02}. As the model in Definition
\ref{diag-toric} is a toric model, it is relatively easy to find
the invariants. Eliminating the parameters $\zeta^{(r)}$,
$\zeta^{(c)}$ and $\zeta^{(\gamma)}$ one obtains the following
result.

\begin{proposition} \label{appl_DS}
The invariants of the model ${\mathcal M}_1$ are the binomials
\begin{equation} \label{binomi1}
p_{i,j}p_{i',j'} - p_{i,j'}p_{i',j}
\end{equation}
for $i$, $i'$, $j$, $j'$ all distinct, and
\begin{equation} \label{binomi2}
p_{i,i'}p_{i',i''}p_{i'',i} - p_{i,i''}p_{i'',i'}p_{i',i}
\end{equation}
for $i$, $i'$, $i''$ all distinct.
\end{proposition}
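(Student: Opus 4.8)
The statement contains two assertions: that the binomials \eqref{binomi1} and \eqref{binomi2} vanish on ${\mathcal M}_1$, and that they exhaust the invariants, i.e. generate the whole toric ideal ${\mathcal I}_A$. The first is immediate: substituting the parametrization \eqref{toric-def-1}--\eqref{toric-def-2} into each binomial, one checks that every variable appearing is off-diagonal (for \eqref{binomi1} the four indices $i,i',j,j'$ are distinct, and for \eqref{binomi2} the three indices $i,i',i''$ are distinct, so in every factor the row index differs from the column index), whence the $\zeta^{(\gamma)}$ parameters never occur and the two monomials in each binomial map to the same power product in the $\zeta^{(r)},\zeta^{(c)}$ variables. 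The substance of the proposition is the completeness of this list.

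The plan is to reduce the computation to a configuration involving only the off-diagonal cells. First I would write down the $I^2 \times 3I$ design matrix $A$ associated with \eqref{toric-def-1}--\eqref{toric-def-2}, whose columns are indexed by the parameters $\zeta^{(r)}_a$, $\zeta^{(c)}_b$, $\zeta^{(\gamma)}_k$. The key observation is that the column of $\zeta^{(\gamma)}_k$ has a single nonzero entry, in the row of the diagonal cell $(k,k)$. Consequently a move $m \in {\mathbb Z}^{I^2}$, i.e. an element of $\ker A^t$, must satisfy $m_{k,k}=0$ for every $k$, in addition to the vanishing of all its row and column sums. Hence every pure binomial of ${\mathcal I}_A$ involves only off-diagonal variables, and ${\mathcal I}_A$ coincides with the toric ideal of the sub-configuration obtained by retaining only the off-diagonal cells with their row/column monomials $\zeta^{(r)}_i\zeta^{(c)}_j$.

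This sub-configuration is exactly the edge ring of the bipartite graph $G$ on vertices $R_1,\dots,R_I$ (rows) and $C_1,\dots,C_I$ (columns) whose edges are the pairs $R_iC_j$ with $i \ne j$; that is, $G$ is the complete bipartite graph $K_{I,I}$ with the perfect matching $\{R_kC_k\}$ removed. I would then invoke the standard description of toric ideals of graphs, by which $I_G$ is generated by the binomials attached to the cycles of $G$, together with the chord-reduction step: if a cycle $C$ has a chord, then its binomial lies in the ideal generated by the binomials of the two shorter cycles into which the chord splits $C$. By induction on the length it therefore suffices to determine the chordless (induced) cycles of $G$. Every $4$-cycle is automatically chordless and uses two distinct rows and two distinct columns, all four indices being forced distinct precisely because the matching edges are absent; these produce exactly the binomials \eqref{binomi1}. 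For a $6$-cycle $R_{a_1}C_{b_1}R_{a_2}C_{b_2}R_{a_3}C_{b_3}$, chordlessness forces each row index to equal its unique non-adjacent column index, i.e. $b_1=a_3,\ b_2=a_1,\ b_3=a_2$, which is exactly the triangle pattern on the triple $\{a_1,a_2,a_3\}$ and yields the binomials \eqref{binomi2}.

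The main obstacle, and the step I would treat most carefully, is showing there are no further chordless cycles, so that the induction terminates with \eqref{binomi1} and \eqref{binomi2} alone. A cycle of length $2k$ uses $k$ rows and $k$ columns, and each of its row vertices is non-adjacent, within the cycle, to $k-2$ of the chosen columns; since the only absent edges of $G$ are the matching edges $R_aC_a$, chordlessness would require that single row index to coincide with all $k-2$ distinct non-adjacent column indices at once, which is impossible as soon as $k \ge 4$. Hence no induced cycle of length $\ge 8$ exists and the list closes. Finally, I would note that combining this generation statement with Theorem \ref{theoldtheo} of \cite{diaconis|sturmfels:98} simultaneously identifies the corresponding moves as a Markov basis for ${\mathcal M}_1$.
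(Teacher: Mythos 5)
Your proposal is correct, but it takes a genuinely different route from the paper. The paper's proof is a two-line citation argument: it quotes the minimal Markov basis for the quasi-independence model computed in \cite{aoki|takemura:05b} (the basic degree-$2$ moves with all indices distinct and the degree-$3$ triangle moves) and then applies Theorem \ref{theoldtheo} in its ``only if'' direction to convert that Markov basis into a generating set of the toric ideal. You instead give a self-contained derivation: the elimination of the $\zeta^{(\gamma)}_k$'s (each appearing in a single cell, forcing $m_{k,k}=0$ for every move) reduces the problem to the edge configuration of $K_{I,I}$ minus a perfect matching, and the standard theory of toric ideals of bipartite graphs --- generation by cycle binomials plus chord reduction to induced cycles --- turns the proposition into the combinatorial classification of chordless cycles of that graph, which you carry out correctly (every $4$-cycle forces all four indices distinct, the $6$-cycles are exactly the triangle patterns, and no induced cycle of length $\ge 8$ survives since one row index cannot equal $k-2\ge 2$ distinct column indices). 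Interestingly, your first reduction step is essentially the same elimination observation the paper itself uses later (via Theorem 3.4.5 of \cite{kreuzer|robbiano:00}) to show ${\mathcal M}_1$ and ${\mathcal M}_2$ share their invariants, but the paper does not exploit it in this proposition. What each approach buys: the paper's proof is shorter but opaque, resting entirely on an external computation; yours explains structurally why only degrees $2$ and $3$ occur and, as you note, recovers the Markov basis of \cite{aoki|takemura:05b} as a corollary by running Theorem \ref{theoldtheo} in the opposite (``if'') direction. Your argument does lean on the generation of bipartite-graph toric ideals by chordless cycle binomials; that is a standard result (Ohsugi--Hibi, Villarreal) but should be cited explicitly if written up.
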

\begin{proof}
In \cite{aoki|takemura:05b}, it is shown that a minimal Markov
basis for the model ${\mathcal M}_1$ is formed by:
\begin{itemize}
\item The basic degree $2$ moves:
\begin{center}
\begin{tabular}{ccc}
    & $j$ & $j'$ \\
$i$ & $+1$ & $-1$ \\
$i'$ & $-1$ & $+1$
\end{tabular}
\end{center}
with $i$, $i'$, $j$, $j'$ all distinct, for $I \geq 4$;

\item The degree $3$ moves of the form:
\begin{center}
\begin{tabular}{cccc}
    & $i$ & $i'$ & $i''$ \\
$i$ & $0$ & $+1$ & $-1$ \\
$i'$ & $-1$ & $0$ & $+1$ \\
$i''$ & $+1$ & $-1$ & $0$
\end{tabular}
\end{center}
with $i$, $i'$, $i''$ all distinct, for $I \geq 3$.
\end{itemize}
Thus, using Theorem \ref{theoldtheo}, the binomials in Equations
\ref{binomi1} and \ref{binomi2} form a set of generators of the
toric ideal associated to the model ${\mathcal M}_1$.
\end{proof}

\begin{remark}
To study the geometry of the model with structural zeros on the
main diagonal it is enough to consider the variety defined by the
polynomials in Proposition \ref{appl_DS} and intersect it with the
hyperplanes $\{p_{i,i}=0\}$ for all $i$.
\end{remark}

In the framework of the mixture models, the diagonal-effect models
have an alternative definition as follows.

\begin{definition} \label{diag-mixt}
The diagonal-effect model ${\mathcal M}_2$ is defined as the set
of probability matrices $P$ such that
\begin{equation} \label{mixtdef}
P = \alpha cr^t + (1-\alpha) D
\end{equation}
where $r$ and $c$ are non-negative vectors with length $I$ and sum
$1$, $D={\mathrm{diag}}(d_1, \ldots, d_I)$ is a non-negative
diagonal matrix with sum $1$, and $\alpha \in [0,1]$.
\end{definition}

\begin{remark}
Notice that while in Definition \ref{diag-toric} the normalization
is applied once, in Definition \ref{diag-mixt} the normalization
is applied twice as we require that both $cr^t$ and $D$ are
probability matrices. This difference will be particularly
relevant in the study of the geometry of the models.
\end{remark}

First, we study the invariants and some geometrical properties of
these models, then we will give some results on their sufficient
statistics.

\begin{theorem}
The models ${\mathcal M}_1$ and ${\mathcal M}_2$ have the same
invariants.
\end{theorem}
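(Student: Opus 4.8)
Since an invariant is by definition a polynomial vanishing on the model, the two models have the same invariants precisely when they have the same vanishing ideal, equivalently the same Zariski closure. The plan is therefore to prove $\overline{{\mathcal M}_1}=\overline{{\mathcal M}_2}$. By Proposition \ref{appl_DS} the vanishing ideal of ${\mathcal M}_1$ is the toric ideal ${\mathcal I}_A$ generated by the binomials \eqref{binomi1} and \eqref{binomi2}; being toric, ${\mathcal I}_A$ is prime, so $V({\mathcal I}_A)=\overline{{\mathcal M}_1}$ is irreducible. It then suffices to establish the two inclusions between $\overline{{\mathcal M}_2}$ and $V({\mathcal I}_A)$.

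First I would prove ${\mathcal M}_2\subseteq V({\mathcal I}_A)$ by direct substitution. For $P=\alpha cr^t+(1-\alpha)D$ in ${\mathcal M}_2$ the entries are $p_{i,j}=\alpha c_i r_j$ when $i\neq j$ and $p_{i,i}=\alpha c_i r_i+(1-\alpha)d_i$. The key observation is that every cell occurring in the invariants is off-diagonal: in \eqref{binomi1} the four indices $i,i',j,j'$ are distinct, and in \eqref{binomi2} the six cells $(i,i'),(i',i''),(i'',i)$ and their transposes all have distinct row and column index because $i,i',i''$ are distinct. Substituting $p_{i,j}=\alpha c_i r_j$ into \eqref{binomi1} gives $\alpha^2 c_i c_{i'} r_j r_{j'}-\alpha^2 c_i c_{i'} r_{j'} r_j=0$, and into \eqref{binomi2} gives $\alpha^3 c_i c_{i'} c_{i''} r_i r_{i'} r_{i''}$ minus the identical monomial, again $0$. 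Hence every generator of ${\mathcal I}_A$ vanishes on ${\mathcal M}_2$, so $\overline{{\mathcal M}_2}\subseteq V({\mathcal I}_A)=\overline{{\mathcal M}_1}$.

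For the reverse inclusion I would compare dimensions and invoke irreducibility: as ${\mathcal M}_2$ is the image of an irreducible parameter space under a polynomial map, $\overline{{\mathcal M}_2}$ is irreducible, and an irreducible closed subvariety of $V({\mathcal I}_A)$ of equal dimension must coincide with it. So it remains to check $\dim{\mathcal M}_2=\dim{\mathcal M}_1$. On the toric side, writing $a_i=\log\zeta^{(r)}_i$, $b_j=\log\zeta^{(c)}_j$, $g_i=\log\zeta^{(\gamma)}_i$, the log-coordinates are $a_i+b_j$ off the diagonal and $a_i+b_i+g_i$ on it; the diagonal is free through $g$, while the off-diagonal map $(a,b)\mapsto(a_i+b_j)_{i\neq j}$ has a one-dimensional kernel for $I\geq 3$, giving affine dimension $(2I-1)+I=3I-1$, hence $\dim{\mathcal M}_1=3I-2$ after imposing normalization. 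On the mixture side the parameters $\alpha$ and the vectors $r,c,d$ in the $(I-1)$-dimensional simplices contribute $1+3(I-1)=3I-2$, and the map is generically finite: from the off-diagonal entries one reconstructs the entire rank-one matrix $\alpha cr^t$ (for instance $c_1r_1=p_{1,2}p_{3,1}/p_{3,2}$, which needs $I\geq 3$), hence $\alpha,c,r$ after normalization, and then $d_i=(p_{i,i}-\alpha c_i r_i)/(1-\alpha)$. Thus $\dim{\mathcal M}_2=3I-2=\dim{\mathcal M}_1$, closing the argument.

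The step I expect to be the main obstacle is exactly this generic finiteness of the mixture parametrization, that is, showing that ${\mathcal M}_2$ attains the full dimension $3I-2$ and does not degenerate; one must make precise that the off-diagonal part of a generic rank-one matrix reconstructs the whole matrix, which requires $I\geq 3$ together with a genericity hypothesis ($c,r$ with nonzero entries and $\alpha\neq 1$). I would also be careful with the real-versus-complex and boundary issues: the equality of invariants concerns the vanishing ideals of the Zariski closures, and since the relatively interior real points of each model are Zariski dense in the corresponding complex variety, the ideal of polynomials vanishing on the real model coincides with the ideal of its complex closure. This is precisely the sense in which ${\mathcal M}_1$ and ${\mathcal M}_2$ share all invariants, even though, as the remarks emphasize, they differ as subsets of $\Delta$.
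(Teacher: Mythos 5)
Your proof is essentially correct, but it takes a genuinely different route from the paper. The paper's argument is elimination-theoretic: it observes that each diagonal parameter $\zeta^{(\gamma)}_i$ (resp.\ $d_i$) occurs in only the single defining equation for $p_{i,i}$, so by the standard elimination result (Theorem 3.4.5 in Kreuzer--Robbiano) those equations are simply discarded when the diagonal parameters are eliminated, leaving the same off-diagonal rank-one system in both models; no dimension count or irreducibility argument is needed. Your route instead proves equality of the Zariski closures: one inclusion by substituting the rank-one off-diagonal entries into the generators of ${\mathcal I}_A$ from Proposition \ref{appl_DS} (correctly noting that all cells appearing in \eqref{binomi1} and \eqref{binomi2} are off-diagonal), and the reverse inclusion by irreducibility plus the dimension count $3I-2$ on both sides, with generic finiteness of the mixture parametrization. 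What your approach buys is a stronger and more explicit conclusion --- you actually identify $\overline{{\mathcal M}_2}$ with the (dehomogenized) toric variety, not merely the equality of ideals --- and it sidesteps a looseness in the paper's proof (the off-diagonal equations of the two models are not literally identical because of the factor $\alpha$ and the constraints $\sum r_i=\sum c_i=\sum d_i=1$, in which each $d_i$ in fact appears a second time). The price is that you must handle the small cases separately (your kernel and reconstruction arguments require $I\geq 3$; for $I=2$ both models are Zariski dense in the simplex and the statement is vacuous), and you implicitly use the density of the positive real points in the complex toric variety and the irreducibility of its hyperplane section by $\sum p_{i,j}=1$; these are standard but should be flagged. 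Neither issue is a genuine gap.
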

\begin{proof}
Writing explicitly the polynomials in Equations
\eqref{toric-def-1} and \eqref{toric-def-2} it is easy to check
that each $\zeta^{(\gamma)}_i$ appears in only one polynomial. The
same for each $d_i$ in Equations \eqref{mixtdef}. Thus, following
Theorem 3.4.5 in \cite{kreuzer|robbiano:00}, such polynomials are
deleted when we eliminate the indeterminates
$\zeta^{(\gamma)}_i$'s and $d_i$'s.

As the remaining polynomials, corresponding to off-diagonal cells,
are the same in both models, the models ${\mathcal M}_1$ and
${\mathcal M}_2$ have the same invariants.
\end{proof}

In order to study in more details the connections between
${\mathcal M}_1$ and ${\mathcal M}_2$ we further investigate their
geometric structure. The non-negativity conditions imposed in the
definitions imply that ${\mathcal M}_1 \not= {\mathcal M}_2$ and
neither ${\mathcal M}_2 \subset {\mathcal M}_1$ nor ${\mathcal
M}_1 \subset {\mathcal M}_2$. We can show this by two easy
examples.

First, let $\zeta^{(r)}$ and $\zeta^{(c)}$ respectively the
vectors, of length $I$, $(\frac{1}{I}, \frac{1}{I}, \dots,
\frac{1}{I})$ and $(\frac{1}{I-1}, \frac{1}{I-1}, \dots,
\frac{1}{I-1})$ and define $\zeta^{(\gamma)}$ as the zero vector.
Thus, the probability table we obtain in toric form is:
\begin{equation*}
P=\begin{pmatrix}
0 & \frac{1}{I(I-1)} & \frac{1}{I(I-1)} & \dots &\frac{1}{I(I-1)}\\
\frac{1}{I(I-1)} & 0 & \frac{1}{I(I-1)} &  \dots &\frac{1}{I(I-1)}\\
\frac{1}{I(I-1)} & \frac{1}{I(I-1)} & 0 & \dots & \frac{1}{I(I-1)} \\
\vdots & \vdots & \vdots & \vdots & \vdots \\
\frac{1}{I(I-1)} & \frac{1}{I(I-1)} & \frac{1}{I(I-1)} & \ldots &
0
\end{pmatrix} \, .
\end{equation*}
Such probability matrix belongs to $\mathcal{M}_1$ by
constructions, while it does not belong to ${\mathcal M}_2$. In
fact, $p_{1,1}=0$ in Equation \eqref{mixtdef} would imply either
$\alpha=0$ (a contradiction, as $P$ is not a diagonal matrix), or
$\zeta^{(r)}_1=0$ (a contradiction, as $P$ has not the first row
with all $0$'s), or $\zeta^{(c)}_1=0$ (a contradiction, as $P$ has
not the first column with all $0$'s).

On the other hand, let $P$ be the diagonal matrix
\begin{equation*}
P=\begin{pmatrix}
\frac{1}{I} & 0 & 0 & \dots & 0\\
0 & \frac{1}{I} & 0 & \dots & 0\\
0 & 0 & \frac{1}{I} & \dots & 0 \\
\vdots & \vdots & \vdots & \vdots & \vdots \\
0 & 0 & 0 & \ldots & \frac{1}{I}
\end{pmatrix} \, .
\end{equation*}
Such probability matrix belongs to ${\mathcal M}_2$ by setting
$\alpha=0$ and $D=\mathrm{diag}(\frac{1}{I}, \ldots ,
\frac{1}{I})$, while it does not  belong to ${\mathcal M}_1$. To
prove this it is enough to note that $p_{1,2}=0$ would imply
either $\zeta^{(r)}_1=0$ (a contradiction, as the first row of $P$
is not zero), or $\zeta^{(c)}_2=0$ (a contradiction, as the second
column of $P$ is not zero).

Nevertheless, in the open simplex we can prove one of the
inclusions.

\begin{proposition} \label{prop-incl1}
In the open simplex $\Delta_{>0}$,
\begin{equation}
{\mathcal M}_2 \subset {\mathcal M}_1
\end{equation}
\end{proposition}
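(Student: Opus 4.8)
The plan is to take an arbitrary $P \in {\mathcal M}_2 \cap \Delta_{>0}$ and construct explicit non-negative toric parameters $\zeta^{(r)}$, $\zeta^{(c)}$, $\zeta^{(\gamma)}$ that witness $P \in {\mathcal M}_1$. Writing $P = \alpha cr^t + (1-\alpha) D$ and using that $D$ is diagonal, the off-diagonal entries read $p_{i,j} = \alpha c_i r_j$ for $i \neq j$, while the diagonal entries read $p_{i,i} = \alpha c_i r_i + (1-\alpha) d_i$. The goal is then to match these against Equations \eqref{toric-def-1} and \eqref{toric-def-2}.

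First I would exploit the openness hypothesis. Since every $p_{i,j} > 0$, the identity $p_{i,j} = \alpha c_i r_j$ for the off-diagonal cells forces $\alpha > 0$ together with $c_i > 0$ and $r_j > 0$ for every index: indeed, for $I \geq 2$ each row index $i$ and each column index $j$ occurs in some off-diagonal cell, so the strict positivity of that entry propagates to $\alpha$, $c_i$ and $r_j$. This is the step where $\Delta_{>0}$ is genuinely used; in particular $\alpha = 0$ is excluded, which is precisely the degenerate case $P = D$ (a diagonal matrix with vanishing off-diagonal entries) that lies on the boundary and explains why the inclusion can fail there.

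Then I would set $\zeta^{(r)}_i = \sqrt{\alpha}\, c_i$, $\zeta^{(c)}_j = \sqrt{\alpha}\, r_j$ and $\zeta^{(\gamma)}_i = p_{i,i}/(\alpha c_i r_i)$. All three are well defined and non-negative by the positivity just established. A direct check gives $\zeta^{(r)}_i \zeta^{(c)}_j = \alpha c_i r_j = p_{i,j}$ for $i \neq j$, matching \eqref{toric-def-1}, and $\zeta^{(r)}_i \zeta^{(c)}_i \zeta^{(\gamma)}_i = \alpha c_i r_i \cdot p_{i,i}/(\alpha c_i r_i) = p_{i,i}$, matching \eqref{toric-def-2}. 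Since $P$ already lies in $\Delta$ as a mixture of probability matrices, no separate normalization of the parameters is needed, and this exhibits $P$ as a point of ${\mathcal M}_1$.

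I do not expect a serious obstacle here: the only delicate point is justifying the strict positivity of $\alpha$, $c_i$ and $r_j$, which is exactly what the open-simplex hypothesis supplies. An alternative, more conceptual route would avoid the explicit parameters altogether: since $P \in {\mathcal M}_2$, every invariant of ${\mathcal M}_2$ vanishes at $P$, hence so does every invariant of ${\mathcal M}_1$, the two models having the same invariants; thus $P$ lies in the real affine variety of the toric ideal associated with ${\mathcal M}_1$, and because ${\mathcal M}_1$ is toric its parametric and implicit descriptions coincide on $\Delta_{>0}$ (Lemma~2 of \cite{rapallo:07}, in the same spirit as Proposition~\ref{proprank1}), yielding $P \in {\mathcal M}_1$. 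The constructive argument is the one I would write out, as it is self-contained and makes the role of positivity transparent.
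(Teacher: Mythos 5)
Your constructive argument is correct and is essentially the paper's own proof: both use strict positivity to get $\alpha,c_i,r_j>0$, then define toric parameters realizing the off-diagonal products and solve for $\zeta^{(\gamma)}_i=p_{i,i}/(\alpha c_ir_i)=1+\frac{(1-\alpha)d_i}{\alpha c_ir_i}\ge 0$ (the paper absorbs $\alpha$ into $\zeta^{(c)}$ rather than splitting it as $\sqrt{\alpha}$, which is immaterial). The only caveat is your parenthetical ``alternative route,'' which as stated would only place $P$ in the variety of the common invariants, not in ${\mathcal M}_1$ itself without further argument; but since you correctly write out the constructive proof, this does not affect the result.
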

\begin{proof}
In fact, let us consider a probability table in ${\mathcal M}_2$,
given by $P = \alpha cr^t + (1-\alpha) D$. As $P \in \Delta_{>0}$,
$\alpha \not=0$, $r_i \not= 0$ for all $i=1, \ldots, I$ and $c_j
\not=0$ for all $j=1, \ldots, I$. Then we can describe $P$ as an
element of ${\mathcal M}_1$ in the following way. We define
$\zeta^{(r)}_i = r_i$ for all $i$ and $\zeta^{(c)}_j= \alpha c_j$,
for all $j$. After that, it is enough to find the diagonal
parameters by solving the equations
\begin{equation*}
\alpha r_i c_i \zeta^{(\gamma)}_i=\alpha r_i c_i + (1-\alpha) d_i
\end{equation*}
that is, as $\alpha\not= 0$, $r_i\not= 0$, and $c_i\not= 0$, we
have
\begin{equation*}
\zeta^{(\gamma)}_i=1+\frac{(1-\alpha) d_i}{\alpha r_i c_i} \, .
\end{equation*}
\end{proof}

Moreover, in the open simplex $\Delta_{>0}$, the inclusion in
Proposition \ref{prop-incl1} is strict. Let us analyze the
probability matrices in the difference ${\mathcal M}_1 \setminus
{\mathcal M}_2$.

Consider three vectors $\zeta^{(r)}=(\zeta^{(r)}_1, \dots,
\zeta^{(r)}_I)$, $\zeta^{(c)}=(\zeta^{(c)}_1, \dots,
\zeta^{(c)}_I)$ and $\zeta^{(\gamma)}=(\zeta^{(\gamma)}_1, \dots,
\zeta^{(\gamma)}_I)$. Using these vectors, we define the
probability table $P$ as in Definition \ref{diag-toric} and then
we normalize it, i.e. dividing by $N_T=\sum_{i\not=
j}\zeta^{(r)}_i\zeta^{(c)}_j+\sum_{i=j}\zeta^{(r)}_i\zeta^{(c)}_j\zeta^{(\gamma)}_i$.
Define also $N=\sum_{i,j} \zeta^{(r)}_i \zeta^{(c)}_j$ (which can
be seen as the normalization of the toric model when
$\zeta^{(\gamma)}$ is the unit vector, i.e., it is the vector with
all components equal to one).

We want to find three vectors $c=(c_1, \dots, c_I)$, $r=(r_1,
\dots, r_I)$, $d=(d_1, \dots, d_I)$, with $\sum r_i=\sum c_i=\sum
d_i=1$ and a scalar $0 \leq \alpha \leq 1$ such that

\begin{equation} \label{big}
\begin{aligned}
\frac{1}{N_T}
\begin{pmatrix}
\zeta^{(r)}_1\zeta^{(c)}_1\zeta^{(\gamma)}_1 & \zeta^{(r)}_1\zeta^{(c)}_2 & \zeta^{(r)}_1\zeta^{(c)}_3  & \dots &\zeta^{(r)}_1\zeta^{(c)}_I \\
\zeta^{(r)}_2\zeta^{(c)}_1  & \zeta^{(r)}_2\zeta^{(c)}_2\zeta^{(\gamma)}_2 & \zeta^{(r)}_2\zeta^{(c)}_3&  \dots &\zeta^{(r)}_2\zeta^{(c)}_I\\
\zeta^{(r)}_3\zeta^{(c)}_1 & \zeta^{(r)}_3\zeta^{(c)}_2 & \zeta^{(r)}_3\zeta^{(c)}_3\zeta^{(\gamma)}_3 & \dots & \zeta^{(r)}_3\zeta^{(c)}_I \\
\vdots & \vdots & \vdots & \vdots & \vdots \\
\zeta^{(r)}_I\zeta^{(c)}_1 & \zeta^{(r)}_I\zeta^{(c)}_2 &
\zeta^{(r)}_I\zeta^{(c)}_3 & \dots &
\zeta^{(r)}_I\zeta^{(c)}_I\zeta^{(\gamma)}_I
\end{pmatrix}=& \\
=\alpha
\begin{pmatrix}
r_1c_1 & r_1c_2 & r_1c_3  & \dots &r_1c_I \\
r_2c_1  & r_2c_2 & r_2c_3&  \dots &r_2c_I\\
r_3c_1 & r_3c_2 & r_3c_3 & \dots & r_3c_I \\
\vdots & \vdots & \vdots & \vdots & \vdots \\
r_Ic_1 & r_Ic_2 & r_Ic_3 & \dots & r_Ic_I
\end{pmatrix}
+(1-\alpha)
\begin{pmatrix}
d_1 & 0 & 0  & \dots & 0 \\
0 & d_2 & 0&  \dots & 0\\
0 & 0 & d_3 & \dots & 0 \\
\vdots & \vdots & \vdots & \vdots & \vdots \\
0 & 0 & 0 & \dots & d_I \\
\end{pmatrix} \, .
\end{aligned}
\end{equation}

We start studying the off-diagonal elements. Consider first the
case $N_T > N$. Thus we have
$\frac{\zeta^{(r)}_i\zeta^{(c)}_j}{N_T} <
\frac{\zeta^{(r)}_i\zeta^{(c)}_j}{N}$ and $\frac{N}{N_T} < 1$. In
this situation the only possible choice is given by
\begin{equation} \label{sol-sist}
\alpha=\frac{N}{N_T} \quad r_i=\frac{\zeta^{(r)}_i}{\sum
\zeta^{(r)}_i}, \quad c_j=\frac{\zeta^{(c)}_j}{\sum \zeta^{(c)}_j}
\, .
\end{equation}
In fact, recalling the definition of $N$, we have
\begin{equation} \label{extradiag}
\alpha r_i c_j=\frac{N}{N_T}\frac{\zeta^{(r)}_i}{\sum
\zeta^{(r)}_i}\frac{\zeta^{(c)}_j}{\sum
\zeta^{(c)}_j}=\frac{N}{N_T}\frac{\zeta^{(r)}_i\zeta^{(c)}_j}{N}=\frac{\zeta^{(r)}_i\zeta^{(c)}_j}{N_T}
\end{equation}
for all $i$, $j$ with $i \not=j$. Taking the log-probabilities, we
obtain a linear system. It is easy to prove, as in Chapter 6 of
\cite{pistone|riccomagno|wynn:01}, that the rank of this system is
equal to $(2I-1)$. Hence, considering the normalizing equations
for $r$ and $c$, we see that the solution in \eqref{sol-sist} is
unique.

Let us consider the generic equation of the $i$-th diagonal
element:
\begin{equation*}
\zeta^{(r)}_i\zeta^{(c)}_i\zeta^{(\gamma)}_i=\alpha
r_ic_i+(1-\alpha)d_i \, .
\end{equation*}
After substituting the previous values for $r_i$, $c_i$ and
$\alpha$ we get
\begin{equation*}
\zeta^{(r)}_i\zeta^{(c)}_i\zeta^{(\gamma)}_i=\frac{N}{N_T}\frac{\zeta^{(r)}_i\zeta^{(c)}_i}{N}+\frac{N_T-N}{N_T}d_i
\, .
\end{equation*}
As we consider matrices in $\Delta_{>0}$, the quantity
$\zeta^{(r)}_i\zeta^{(c)}_i$ is different from zero. Therefore,
after multiplying for $N_T$ and dividing by
$\zeta^{(r)}_i\zeta^{(c)}_i$ we obtain
\begin{equation*}
\zeta^{(\gamma)}_i=1+\frac{N_T-N}{\zeta^{(r)}_i\zeta^{(c)}_i}d_i
\end{equation*}
that is
\begin{equation*}
d_i=\frac{\zeta^{(r)}_i\zeta^{(c)}_i}{N_T-N}(\zeta^{(\gamma)}_i-1)
\end{equation*}
Thus we see that the $P \in {\mathcal{M}}_1\setminus
{\mathcal{M}}_2$ when $N_T> N$ and there exists at least an index
$i$ such that $\zeta^{(\gamma)}_i<1$.

When $N_T=N$, from Equations \eqref{extradiag} we obtain
$\alpha=1$. Therefore in Equation \eqref{big} the matrix on the
right hand side has rank 1, and this implies that $P \in {\mathcal
M}_2$ if and only if $\zeta^{(\gamma)}_i=1$ for all $i$.

Consider now the case $N_T < N$. Hence we have
$\frac{\zeta^{(r)}_i\zeta^{(c)}_j}{N_T}>
\frac{\zeta^{(r)}_i\zeta^{(c)}_j}{N}$ and $\frac{N}{N_T}> 1$.
Again the only possible choice for the off-diagonal elements would
be given by
\begin{equation*}
\alpha=\frac{N}{N_T} \quad r_i=\frac{\zeta^{(r)}_i}{\sum
\zeta^{(r)}_i}, \quad c_i=\frac{\zeta^{(c)}_i}{\sum \zeta^{(c)}_i}
\end{equation*}
but in this case $\alpha=\frac{N}{N_T}>1$. Thus we conclude that
all $P \in {\mathcal{M}}_1$ with $N_T < N$ are in
${\mathcal{M}}_1\setminus {\mathcal{M}}_2$. This leads to the
following result.

\begin{theorem}
Let $P \in {\mathcal M}_1 \cap \Delta_{>0}$ be a strictly positive
probability table given by the vectors
$\zeta^{(r)}=(\zeta^{(r)}_1, \dots, \zeta^{(r)}_I)$,
$\zeta^{(c)}=(\zeta^{(c)}_1, \dots, \zeta^{(c)}_I)$ and
$\zeta^{(\gamma)}=(\zeta^{(\gamma)}_1, \dots,
\zeta^{(\gamma)}_I)$. Define $N_T=\sum_{i\not=
j}\zeta^{(r)}_i\zeta^{(c)}_j+\sum_{i=j}\zeta^{(r)}_i\zeta^{(c)}_j\zeta^{(\gamma)}_i$
and $N=\sum_{i,j} \zeta^{(r)}_i\zeta^{(c)}_j$. Then $P \in
{\mathcal M}_1 \setminus {\mathcal M}_2$ if one of the following
situations holds:
\begin{itemize}
\item[(i)] $N_T < N$;

\item[(ii)] $N_T = N$ and there exists at least an index $i$ such
that $\zeta^{(\gamma)}_i\not=1$;

\item[(iii)] $N_T> N$ and there exists at least an index $i$ such
that $\zeta^{(\gamma)}_i<1$.
\end{itemize}
\end{theorem}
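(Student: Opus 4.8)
The plan is to exploit the hypothesis $P \in {\mathcal M}_1$, so that only the non-membership $P \notin {\mathcal M}_2$ remains to be shown, and to do this by contradiction in all three cases at once. First I would assume that a decomposition $P = \alpha c r^t + (1-\alpha) D$ exists, with $r,c,d$ non-negative and summing to $1$ and $\alpha \in [0,1]$, and match it against the normalized toric table entry by entry. The key first move is to isolate the off-diagonal cells, where the equation reads $\zeta^{(r)}_i \zeta^{(c)}_j / N_T = \alpha r_i c_j$ for $i \ne j$. Since $P \in \Delta_{>0}$ every entry is strictly positive, so I can pass to log-probabilities and obtain a linear system; following the rank count in Chapter $6$ of \cite{pistone|riccomagno|wynn:01} this system has rank $2I-1$, which together with the normalizations $\sum_k r_k = \sum_k c_k = 1$ pins down the \emph{unique} solution $\alpha = N/N_T$, $r_i = \zeta^{(r)}_i/\sum_k \zeta^{(r)}_k$, $c_j = \zeta^{(c)}_j / \sum_k \zeta^{(c)}_k$. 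This is exactly the content derived in \eqref{sol-sist}--\eqref{extradiag}, and it reduces each case to a feasibility check of these forced values.

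Case (i) is then immediate: if $N_T < N$ the forced $\alpha = N/N_T$ exceeds $1$, violating $\alpha \in [0,1]$, so no admissible decomposition exists and $P \notin {\mathcal M}_2$.

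For (ii) and (iii) I would feed the forced values back into the diagonal equation $\zeta^{(r)}_i \zeta^{(c)}_i \zeta^{(\gamma)}_i / N_T = \alpha r_i c_i + (1-\alpha) d_i$. Using $N = (\sum_k \zeta^{(r)}_k)(\sum_k \zeta^{(c)}_k)$ one computes $\alpha r_i c_i = \zeta^{(r)}_i \zeta^{(c)}_i / N_T$, so the equation collapses to $(1-\alpha) d_i = (\zeta^{(r)}_i \zeta^{(c)}_i / N_T)(\zeta^{(\gamma)}_i - 1)$. In case (ii), $N_T = N$ gives $\alpha = 1$, the left side vanishes, and strict positivity of $\zeta^{(r)}_i \zeta^{(c)}_i$ forces $\zeta^{(\gamma)}_i = 1$ for every $i$; the assumed index with $\zeta^{(\gamma)}_i \ne 1$ yields the contradiction. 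In case (iii), $N_T > N$ gives $\alpha < 1$, so I can solve $d_i = \frac{\zeta^{(r)}_i \zeta^{(c)}_i}{N_T - N}(\zeta^{(\gamma)}_i - 1)$; the non-negativity requirement $d_i \ge 0$ then forces $\zeta^{(\gamma)}_i \ge 1$ for all $i$, and the assumed index with $\zeta^{(\gamma)}_i < 1$ again gives a contradiction.

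I expect the only delicate point to be the uniqueness of the off-diagonal solution: the rank-$(2I-1)$ claim for the log-linear system presupposes enough off-diagonal cells (hence a mild lower bound on $I$) and uses strict positivity both to guarantee that the logarithms are defined and that the forced $r,c$ are genuine probability vectors. Everything after that is a substitution and a sign check, so the main effort is to state the off-diagonal reduction cleanly and then organize the three feasibility obstructions—an out-of-range $\alpha$, a forced $\zeta^{(\gamma)}_i = 1$, and a negative $d_i$—as the three listed cases.
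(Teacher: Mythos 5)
Your proposal is correct and follows essentially the same route as the paper: the paper's own argument (given in the discussion surrounding Equations \eqref{big}--\eqref{extradiag}) likewise forces the unique off-diagonal solution $\alpha=N/N_T$, $r_i=\zeta^{(r)}_i/\sum_k\zeta^{(r)}_k$, $c_j=\zeta^{(c)}_j/\sum_k\zeta^{(c)}_k$ via the rank-$(2I-1)$ log-linear system, and then reads off the three obstructions ($\alpha>1$; forced $\zeta^{(\gamma)}_i=1$; negative $d_i$) exactly as you do. Your explicit flag about the rank hypothesis is a reasonable extra caution but does not change the argument.
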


We conclude this section with a result on the sufficient
statistics for the models ${\mathcal M}_1$ and ${\mathcal M}_2$.

\begin{proposition}
For an independent sample of size $n$, the models ${\mathcal M}_1$
and ${\mathcal M}_2$ have the same sufficient statistic.
\end{proposition}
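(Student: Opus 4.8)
The plan is to apply the Neyman--Fisher factorization criterion to the likelihood of each model and to show that in both cases the likelihood depends on the observed table $f=(f_{i,j})$ only through the row sums $f_{i,+}=\sum_j f_{i,j}$, the column sums $f_{+,j}=\sum_i f_{i,j}$, and the diagonal entries $f_{i,i}$. For ${\mathcal M}_1$ this is immediate from the toric structure: substituting Equations \eqref{toric-def-1} and \eqref{toric-def-2} into the likelihood $L_1=\prod_{i,j}p_{i,j}^{f_{i,j}}$ and collecting exponents gives
\[L_1=\frac{1}{N_T^{\,n}}\prod_i (\zeta^{(r)}_i)^{f_{i,+}}\prod_j (\zeta^{(c)}_j)^{f_{+,j}}\prod_i (\zeta^{(\gamma)}_i)^{f_{i,i}},\]
so that the image $A^t f$ --- the very statistic whose level sets are the fibers connected by the Markov basis of Proposition \ref{appl_DS} --- is the minimal sufficient statistic, namely the triple of row sums, column sums and diagonal entries.

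The work is in ${\mathcal M}_2$. First I would write the likelihood, using that $P=\alpha cr^t+(1-\alpha)D$ is already normalized, in the form
\[L_2=\Big[\prod_{i\ne j}(\alpha r_i c_j)^{f_{i,j}}\Big]\cdot\Big[\prod_i\big(\alpha r_i c_i+(1-\alpha)d_i\big)^{f_{i,i}}\Big].\]
The second bracket visibly depends on $f$ only through the diagonal counts $\{f_{i,i}\}$. For the first bracket I would collect exponents exactly as in the toric case: the exponent of $r_i$ is $\sum_{j\ne i}f_{i,j}=f_{i,+}-f_{i,i}$, the exponent of $c_j$ is $\sum_{i\ne j}f_{i,j}=f_{+,j}-f_{j,j}$, and the exponent of $\alpha$ is $\sum_{i\ne j}f_{i,j}=n-\sum_i f_{i,i}$. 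Hence the first bracket also depends on $f$ only through $\{f_{i,+}\}$, $\{f_{+,j}\}$ and $\{f_{i,i}\}$, and no individual off-diagonal count survives. By the factorization criterion the triple (row sums, column sums, diagonal entries) is sufficient for ${\mathcal M}_2$, the same statistic as for ${\mathcal M}_1$.

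The main obstacle to keep in mind is that ${\mathcal M}_2$ is a mixture and hence not an exponential family, so one cannot simply invoke exponential-family theory to read off the sufficient statistic the way one does for the toric model ${\mathcal M}_1$. The point that makes the argument go through is structural rather than computational: each diagonal probability $p_{i,i}$ enters the likelihood only through the single factor $p_{i,i}^{f_{i,i}}$, so $f_{i,i}$ is sufficient for the diagonal parameters no matter how complicated the dependence of $p_{i,i}$ on $(\alpha,r,c,d)$ is, while the off-diagonal cells retain the rank-one independence structure whose sufficient statistics are the off-diagonal row and column sums. To justify that the statistics are genuinely \emph{the same} I would finally note that the triple is minimal sufficient for ${\mathcal M}_1$ by the exponential-family structure and that the same factorization shows it cannot be reduced further for ${\mathcal M}_2$, the rank-one off-diagonal part already forcing the row and column sums and the isolated diagonal factors forcing the $f_{i,i}$.
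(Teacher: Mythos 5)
Your proof is correct and follows essentially the same route as the paper: factor the likelihood of each model, collect exponents, and observe that both $L_1$ and $L_2$ depend on $f$ only through the row sums, column sums, and diagonal counts, with the mixture likelihood splitting into an off-diagonal rank-one part and isolated diagonal factors $\left(\alpha r_i c_i + (1-\alpha)d_i\right)^{f_{i,i}}$. The extra remarks on minimality go beyond what the paper claims but do not change the argument.
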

\begin{proof}
In fact, let $f=(f_{i,j})$ be the table of counts for the sample.
The likelihood function for the model in toric form is
\begin{equation*}
L_1(\zeta^{(r)},\zeta^{(c)},\zeta^{(\gamma)};f) = \prod_{i,j}
p_{i,j}^{f_{i,j}} = \prod_{i \ne j}
(\zeta^{(r)}_i\zeta^{(c)}_j)^{f_{i,j}} \prod_{i}
(\zeta^{(r)}_i\zeta^{(c)}_i\zeta^{(\gamma)}_i)^{f_{i,i}} =
\end{equation*}
\begin{equation*}
=  \prod_{i} (\zeta^{(r)})^{f_{i,+}} \prod_{j}
(\zeta^{(c)}_j)^{f_{+,j}} \prod_{i} (\zeta^{(\gamma)}_i)^{f_{i,i}}
\, ,
\end{equation*}
where $f_{i,+}$ and $f_{+,j}$ are the row and column marginal
totals, respectively. This proves that the marginal totals
together with the counts on the main diagonal are a sufficient
statistic. With the same statistic we can also write the
likelihood under the mixture model ${\mathcal M}_2$:
\begin{equation*}
L_2(r,c,d,\alpha;f) = \prod_{i,j} p_{i,j}^{f_{i,j}} = \prod_{i \ne
j} (\alpha r_i c_j)^{f_{i,j}} \prod_{i} (\alpha r_i c_i +
(1-\alpha) d_i)^{f_{i,i}} =
\end{equation*}
\begin{equation*}
= \alpha^{(n-\sum_{i} f_{i,i})} \prod_{i} r_i^{(f_{i,+}-f_{i,i})}
\prod_{j} c_j^{(f_{+,j}-f_{j,j})} \prod_{i} (\alpha r_i c_i +
(1-\alpha) d_i)^{f_{i,i}} \, .
\end{equation*}
\end{proof}

\section{A geometric description of the diagonal-effect models} \label{geo-desc}

In this section, we try to describe the models we studied using
some geometric flavor. This analysis will also shed some light on
the elements in $\mathcal{M}_1\setminus\mathcal{M}_2$. We use very
basic and classic geometric ideas and facts. As references, we
suggest \cite{harris:92} and \cite{hartshorne:77}.

We start with the model ${\mathcal M}_1$. The basic object we need
is the variety $V$ describing all $I\times I$ matrices having rank
at most one. When we fix $\zeta^{(\gamma)}_i=1,i=1,\ldots,I$ the
parametrization in \eqref{toric-def-1} and \eqref{toric-def-2} is
just describing $V$. Hence, fixing values for all the
$\zeta^{(c)}_i$'s and the $\zeta^{(r)}_i$'s and setting
$\zeta^{(\gamma)}_j=1, j=1,\ldots,I$ we obtain a point $M\in V$.
Now, if we let $\zeta^{(\gamma)}_l$ to vary we are describing a
line passing through $M$ and moving in the direction of the vector
$(0,\ldots,1,\ldots,0)$, where the only non zero coordinate is the
$(l,l)$-th; the set of all these lines is a cylinder. Now we set
$\zeta^{(\gamma)}_l=a\zeta$ and $\zeta^{(\gamma)}_m=b\zeta$ for
fixed reals $a$ and $b$. When we let $\zeta$ vary, we are now
describing a cylinder with directrix parallel to the line of
equations $bp_{l,l}-ap_{m,m}$, $p_{i,j}=0$ for $(i,j)\neq
(l,l),(m,m)$. The same argument can be repeated fixing linear
relations among the diagonal elements.  In conclusion, we can
describe $\mathcal{M}_1$ as the intersection of the simplex with
the union of cylinders having base $V$ and directrix parallel to
the directions given by diagonal elements.

We now use the join of two varieties, i.e. the closure of the set
of all the lines joining a point of any variety with any point of
another variety. In order to do this, we also need to consider $W$
the variety of diagonal matrices. Then $\mathcal{M}_2$ is the
union of the segment joining a point of $V\cap\Delta$ with a point
of $W\cap\Delta$, i.e. a subvariety of the join of $V$ and $W$.
Each of this segment lies on a line contained in one of the
cylinder we used to construct $\mathcal{M}_1$. Hence we get again
the inclusion $\mathcal{M}_2\subset\mathcal{M}_1$ in $\Delta$.

\section{Common-diagonal-effect models} \label{commdiagsect}

A different version of the diagonal-effect models are the
so-called common-diagonal-effect models. The definitions are as in
the models above but:
\begin{itemize}
\item The vector $\zeta^{(\gamma)}$ is constant in the toric model
definition;

\item The matrix $D$ is ${\mathrm{diag}}(\frac 1 I, \ldots, \frac
1 I)$ in the mixture model definition.
\end{itemize}

This kind of models is much more complicated than the models in
Section \ref{diageffsect}. Just to have a first look at these
models, we note that for $I=3$ the diagonal-effect models have
only one invariant. For the common-diagonal-effect models, we have
computed the invariants with CoCoA, see \cite{cocoa}, for $I=3$
and we have obtained the following lists of invariants.

For the toric model we obtain $9$ binomials:
\begin{equation*}
p_{1,2}p_{2,3}p_{3,1} - p_{1,3}p_{2,1}p_{3,2}\, ,
\end{equation*}
\begin{equation*}
p_{1,3}p_{2,2}p_{3,1} - p_{1,1}p_{2,3}p_{3,2} \, ,
\end{equation*}
\begin{equation*}
-p_{1,1}p_{2,3}p_{3,2} + p_{1,2}p_{2,1}p_{3,3} \, ,
\end{equation*}
\begin{equation*}
-p_{2,2}p_{2,3}p_{3,1}^2 + p_{2,1}^2p_{3,2}p_{3,3} \, ,
\end{equation*}
\begin{equation*}
p_{1,2}p_{2,2}p_{3,1}^2 - p_{1,1}p_{2,1}p_{3,2}^2 \, ,
\end{equation*}
\begin{equation*}
-p_{1,1}p_{1,3}p_{3,2}^2 + p_{1,2}^2p_{3,1}p_{3,3} \, ,
\end{equation*}
\begin{equation*}
-p_{1,3}^2p_{2,2}p_{3,2} + p_{1,2}^2p_{2,3}p_{3,3} \, ,
\end{equation*}
\begin{equation*}
-p_{1,1}p_{2,3}^2p_{3,1} + p_{1,3}p_{2,1}^2p_{3,3} \, ,
\end{equation*}
\begin{equation*}
p_{1,3}^2p_{2,1}p_{2,2} - p_{1,1}p_{1,2}p_{2,3}^2 \, .
\end{equation*}

For the mixture model we obtain:
\begin{itemize}
\item $1$ binomial
\begin{equation*}
p_{1,2}p_{2,3}p_{3,1} - p_{1,3}p_{2,1}p_{3,2} \, ;
\end{equation*}
\item $12$ polynomials with $4$ terms
\begin{equation*}
p_{1,3}p_{2,1}p_{2,2} - p_{1,2}p_{2,1}p_{2,3} +
p_{1,3}p_{2,3}p_{3,1} - p_{1,3}p_{2,1}p_{3,3} \, ,
\end{equation*}
\begin{equation*}
-p_{1,2}p_{1,3}p_{2,2} + p_{1,2}^2p_{2,3} - p_{1,3}^2p_{3,2} +
p_{1,2}p_{1,3}p_{3,3} \, ,
\end{equation*}
\begin{equation*}
 p_{1,3}p_{2,1}p_{3,1} -
p_{1,1}p_{2,3}p_{3,1} + p_{2,2}p_{2,3}p_{3,1} -
p_{2,1}p_{2,3}p_{3,2} \, ,
\end{equation*}
\begin{equation*}
 p_{1,2}p_{1,3}p_{3,1} -
p_{1,1}p_{1,3}p_{3,2} + p_{1,3}p_{2,2}p_{3,2} -
p_{1,2}p_{2,3}p_{3,2} \, ,
\end{equation*}
\begin{equation*}
 p_{1,3}p_{2,1}^2 -
p_{1,1}p_{2,1}p_{2,3} - p_{2,3}^2p_{3,1} + p_{2,1}p_{2,3}p_{3,3}
\, ,
\end{equation*}
\begin{equation*}
 p_{1,3}^2p_{2,1} - p_{1,1}p_{1,3}p_{2,3} +
p_{1,3}p_{2,2}p_{2,3} - p_{1,2}p_{2,3}^2 \, ,
\end{equation*}
\begin{equation*}
p_{1,2}p_{1,3}p_{2,1} - p_{1,1}p_{1,2}p_{2,3} -
p_{1,3}p_{2,3}p_{3,2} + p_{1,2}p_{2,3}p_{3,3} \, ,
\end{equation*}
\begin{equation*}
-p_{2,1}p_{2,2}p_{3,1} - p_{2,3}p_{3,1}^2 + p_{2,1}^2p_{3,2} +
p_{2,1}p_{3,1}p_{3,3} \, ,
\end{equation*}
\begin{equation*}
-p_{1,2}p_{2,2}p_{3,1} + p_{1,2}p_{2,1}p_{3,2} -
p_{1,3}p_{3,1}p_{3,2} + p_{1,2}p_{3,1}p_{3,3} \, ,
\end{equation*}
\begin{equation*}
p_{1,2}p_{3,1}^2 - p_{1,1}p_{3,1}p_{3,2} - p_{2,2}p_{3,1}p_{3,2} -
p_{2,1}p_{3,2}^2 \, ,
\end{equation*}
\begin{equation*}
p_{1,2}p_{2,1}p_{3,1} - p_{1,1}p_{2,1}p_{3,2} -
p_{2,3}p_{3,1}p_{3,2} + p_{2,1}p_{3,2}p_{3,3} \, ,
\end{equation*}
\begin{equation*}
p_{1,2}^2p_{3,1} - p_{1,1}p_{1,2}p_{3,2} - p_{1,3}p_{3,2}^2 +
p_{1,2}p_{3,2}p_{3,3} \, ;
\end{equation*}
\item $6$ polynomials with $8$ terms
\begin{multline*}
p_{1,1}p_{1,3}p_{2,2} - p_{1,3}p_{2,2}^2 - p_{1,1}p_{1,2}p_{2,3} +
p_{1,2}p_{2,2}p_{2,3} + \\ + p_{1,3}^2p_{3,1} -
p_{1,3}p_{2,3}p_{3,2} - p_{1,1}p_{1,3}p_{3,3} +
p_{1,3}p_{2,2}p_{3,3} \, ,
\end{multline*}
\begin{multline*}
p_{1,1}p_{1,3}p_{2,1} - p_{1,1}^2p_{2,3} - p_{1,2}p_{2,1}p_{2,3} +
p_{1,1}p_{2,2}p_{2,3} + \\ + p_{2,3}^2p_{3,2} -
p_{1,3}p_{2,1}p_{3,3} + p_{1,1}p_{2,3}p_{3,3} -
p_{2,2}p_{2,3}p_{3,3} \, ,
\end{multline*}
\begin{multline*}
-p_{1,1}p_{2,2}p_{3,1} + p_{2,2}^2p_{3,1} - p_{1,3}p_{3,1}^2 +
p_{1,1}p_{2,1}p_{3,2} + \\ - p_{2,1}p_{2,2}p_{3,2} +
p_{2,3}p_{3,1}p_{3,2} + p_{1,1}p_{3,1}p_{3,3} -
p_{2,2}p_{3,1}p_{3,3} \, ,
\end{multline*}
\begin{multline*}
p_{1,1}p_{1,2}p_{3,1} - p_{1,1}^2p_{3,2} - p_{1,2}p_{2,1}p_{3,2} +
p_{1,1}p_{2,2}p_{3,2} + \\ + p_{2,3}p_{3,2}^2 -
p_{1,2}p_{3,1}p_{3,3} + p_{1,1}p_{3,2}p_{3,3} -
p_{2,2}p_{3,2}p_{3,3} \, ,
\end{multline*}
\begin{multline*}
 p_{1,2}p_{2,1}^2 - p_{1,1}p_{2,1}p_{2,2} - p_{1,1}p_{2,3}p_{3,1} -
p_{2,1}p_{2,3}p_{3,2} + \\ + p_{1,1}p_{2,1}p_{3,3} +
p_{2,1}p_{2,2}p_{3,3} + p_{2,3}p_{3,1}p_{3,3} - p_{2,1}p_{3,3}^2
\, ,
\end{multline*}
\begin{multline*}
p_{1,2}^2p_{2,1} - p_{1,1}p_{1,2}p_{2,2} - p_{1,1}p_{1,3}p_{3,2} -
p_{1,2}p_{2,3}p_{3,2} + \\ + p_{1,1}p_{1,2}p_{3,3} +
p_{1,2}p_{2,2}p_{3,3} + p_{1,3}p_{3,2}p_{3,3} - p_{1,2}p_{3,3}^2
\, ;
\end{multline*}
\item $1$ polynomial with $12$ terms
\begin{multline*}
p_{1,1}p_{1,2}p_{2,1} - p_{1,1}^2p_{2,2} - p_{1,2}p_{2,1}p_{2,2} +
p_{1,1}p_{2,2}^2 + \\ - p_{1,1}p_{1,3}p_{3,1} +
p_{2,2}p_{2,3}p_{3,2} + p_{1,1}^2p_{3,3} - p_{2,2}^2p_{3,3} + \\ +
p_{1,3}p_{3,1}p_{3,3} - p_{2,3}p_{3,2}p_{3,3} - p_{1,1}p_{3,3}^2 +
p_{2,2}p_{3,3}^2 \, .
\end{multline*}
\end{itemize}

In the case of toric models, the invariants can be characterized
theoretically. In fact, also in this case a Markov basis is known.
In \cite{hara|takemura|yoshida:08} it is shown that a Markov basis
for this toric model is formed by $6$ different types of moves. We
need the 2 types of moves for the diagonal-effect model plus the
moves below:
\begin{itemize}
\item The degree $3$ moves of the form:
\begin{center}
\begin{tabular}{cccc}
    & $i$ & $i'$ & $i''$ \\
$i$ & $+1$ & $0$ & $-1$ \\
$i'$ & $0$ & $-1$ & $+1$ \\
$i''$ & $-1$ & $+1$ & $0$
\end{tabular} \ \ \
\begin{tabular}{cccc}
    & $i$ & $i'$ & $i''$ \\
$i$ & $+1$ & $-1$ & $0$ \\
$i'$ & $-1$ & $0$ & $+1$ \\
$i''$ & $0$ & $+1$ & $-1$
\end{tabular} \ \ \
\begin{tabular}{cccc}
    & $i$ & $i'$ & $i''$ \\
$i$ & $0$ & $-1$ & $+1$ \\
$i'$ & $-1$ & $+1$ & $0$ \\
$i''$ & $+1$ & $0$ & $-1$
\end{tabular}
\end{center}
with $i$, $i'$, $i''$ all distinct, for $I \geq 3$;

\item The degree $3$ moves of the form:
\begin{center}
\begin{tabular}{cccc}
    & $i$ & $i'$ & $j$  \\
$i$ & $+1$ & $0$ & $-1$ \\
$i'$ & $0$ & $-1$ & $+1$ \\
$j'$ & $-1$ & $+1$ & $0$
\end{tabular}
\end{center}
with $i$, $i'$, $j$, $j'$ all distinct, for $I \geq 4$.

\item The degree $4$ moves of the form:
\begin{center}
\begin{tabular}{cccc}
    & $i$ & $i'$ & $j$  \\
$i$ & $+1$ & $+1$ & $-2$ \\
$i'$ & $-1$ & $-1$ & $+2$
\end{tabular}
\end{center}
with $i$, $i'$, $j$ all distinct, for $I \geq 3$, and their
transposed.

\item The degree $4$ moves of the form:
\begin{center}
\begin{tabular}{ccccc}
    & $i$ & $i'$ & $j$ & $j'$ \\
$i$ & $+1$ & $+1$ & $-1$ & $-1$ \\
$i'$ & $-1$ & $-1$ & $+1$ & $+1$
\end{tabular}
\end{center}
with $i$, $i'$, $j$, $j'$ all distinct, for $I \geq 4$, and their
transposed.
\end{itemize}

Therefore, as in Proposition \ref{appl_DS}, we can easily derive
the invariants. We do not write explicitly the analog of
Proposition \ref{appl_DS} for common-diagonal-effect models in
order to save space.

The study of the common-diagonal-effect models in mixture form is
much more complicated. In fact, notice that in the computations
above, the mixture model present invariants which are not
binomials. However, some partial results can be stated.

\begin{theorem} \label{inv-common}
\begin{itemize}
\item[(a)] For $i$, $j$, $k$, $l$ all distinct we define
\begin{equation*}
b_{ijkl}=p_{i,j}p_{k,l} - p_{i,l}p_{k,j}  \, ;
\end{equation*}
\item[(b)] For $i$, $j$, $k$, all distinct we define
\begin{equation*}
t_{ijk}=p_{i,j}p_{j,k}p_{k,i} - p_{i,k}p_{k,j}p_{j,i}  \, ;
\end{equation*}
\item[(c)]For $(i,j)$ and $(k,l)$ two distinct pairs in $\{1,
\ldots, I\}$ with $i\not= j$, and $k\not= l$ and $m\in \{1, \ldots
, I\} \setminus \{i,j\}$ and $n \in \{1, \ldots, I\} \setminus
\{k,l\}$ with $m\not=n$ we define
\begin{equation*}
f_{ijklmn}=p_{i,j}p_{k,l}p_{n,n}-p_{i,j}p_{n,l}p_{k,n}-p_{i,j}p_{k,l}p_{m,m}+p_{k,l}p_{m,j}p_{i,m}
\, ;
\end{equation*}
\item[(d)] for two distinct indices $i$ and $j$ in $\{1, \ldots,
I\}$ and for $k \in \{1, \ldots, I\} \setminus \{i,j\}$ we define
\begin{equation*}
\begin{split}
g_{ijk}= & p_{i,j}p_{i,i}p_{k,k}+p_{i,j}p_{j,j}p_{k,k}-p_{i,j}p_{i,i}p_{j,j}+p_{i,j}p_{k,k}p_{k,k}+\\
&+p_{k,k}p_{i,k}p_{k,j}-p_{i,i}p_{i,k}p_{k,j}+p_{i,j}^2p_{j,i}-p_{i,j}p_{k,j}p_{j,k}
\, ;
\end{split}
\end{equation*}
\item[(e)] For $i$, $j$, $k$, all distinct we define
\begin{equation*}
\begin{split}
h_{ijk}= & p_{i,i}p_{j,j}^2+p_{i,i}^2p_{k,k}+p_{j,j}p_{k,k}^2-p_{i,i}^2p_{j,j}-p_{j,j}^2p_{k,k}-p_{i,i}p_{k,k}^2+ p_{i,i}p_{i,j}p_{j,i}+\\
 &-p_{i,i}p_{i,k}p_{k,i}+p_{j,j}p_{j,k}p_{k,j}-p_{j,j}p_{j,i}p_{i,j}+p_{k,k}p_{k,i}p_{i,k}-p_{k,k}p_{k,j}p_{j,k}  \, .
\end{split}
\end{equation*}
\end{itemize}
Then the previous polynomials are invariants for the
common-diagonal-effect models in mixture form.
\end{theorem}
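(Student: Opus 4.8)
The plan is to prove that the five families are invariants by directly substituting the common-diagonal mixture parametrization of Definition~\ref{diag-mixt} and checking that each polynomial vanishes identically as a function of the parameters. Writing the off-diagonal entries in rank-one form $p_{i,j}=\alpha c_i r_j$ for $i\neq j$ and the diagonal entries as $p_{i,i}=\alpha c_i r_i+\delta$, the essential feature of the \emph{common}-diagonal case is that $\delta=(1-\alpha)/I$ is one and the same constant for every index $i$. Throughout I would keep two structural identities at hand: first, the off-diagonal entries form a rank-one matrix (up to the scalar $\alpha$), so every $2\times 2$ minor and every triangular product built only from off-diagonal cells is zero; second, setting $q_i:=\alpha c_i r_i=p_{i,i}-\delta$, one has $p_{i,j}p_{j,i}=q_iq_j$ for all $i\neq j$.

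Families (a) and (b) require no computation. The indices occurring in $b_{ijkl}$ and in $t_{ijk}$ are pairwise distinct, so every entry involved is off-diagonal; these are precisely the binomials of Proposition~\ref{appl_DS}, that is, the invariants of $\mathcal{M}_2$. Since the common-diagonal mixture model is obtained from $\mathcal{M}_2$ by the further restriction $D=\mathrm{diag}(1/I,\ldots,1/I)$, it is contained in $\mathcal{M}_2$, and $b_{ijkl}$, $t_{ijk}$ vanish on it by the first structural identity.

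For the families that involve diagonal cells, the common value of $\delta$ is what makes things cancel. In $f_{ijklmn}$ the only diagonal factors are $p_{n,n}$ and $p_{m,m}$, and they enter multiplied by the common off-diagonal factor $p_{i,j}p_{k,l}$; since $p_{n,n}-p_{m,m}=q_n-q_m$ the constant $\delta$ drops out, and rewriting $p_{i,j}p_{k,l}q_n$ and $p_{i,j}p_{k,l}q_m$ through the rank-one structure matches them against the two remaining terms, so that the four terms collapse in pairs. The family $h_{ijk}$ is the most transparent: its purely diagonal part is the antisymmetric product $(p_{i,i}-p_{j,j})(p_{j,j}-p_{k,k})(p_{k,k}-p_{i,i})$, which equals $(q_i-q_j)(q_j-q_k)(q_k-q_i)$ because $\delta$ cancels in every difference; its mixed part, after the substitution $p_{i,j}p_{j,i}=q_iq_j$, becomes $(q_i+\delta)q_i(q_j-q_k)+(q_j+\delta)q_j(q_k-q_i)+(q_k+\delta)q_k(q_i-q_j)$, whose $\delta$-linear part telescopes to zero and whose $\delta$-free part equals $-(q_i-q_j)(q_j-q_k)(q_k-q_i)$; the two parts cancel.

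The family $g_{ijk}$ is handled by the same substitution and is, I expect, the main obstacle, being the least symmetric of the five: here I would organize the verification by collecting the coefficients of $\delta^2$, $\delta^1$ and $\delta^0$ separately and checking that each vanishes, again using $p_{i,j}p_{j,i}=q_iq_j$ together with the rank-one relations among the off-diagonal variables. In every case the mechanism is the same: because the diagonal cells carry a single shared constant $\delta$, each $\delta$-dependent contribution either cancels inside a difference of diagonal entries or telescopes away, and what remains is a polynomial identity in the $q_i$ and the off-diagonal variables that holds by the rank-one structure of the off-diagonal block.
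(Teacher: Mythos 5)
Your overall strategy is the same as the paper's --- substitute the mixture parametrization and check that each polynomial vanishes identically --- but your bookkeeping is genuinely better. The paper expands every product into raw monomials in $\alpha, r, c, d$ and chases cancellations term by term (for case (d) this means writing out four triple products of eight monomials each and boxing the ones that recur). Your substitution $p_{i,i}=q_i+\delta$ with $q_i=\alpha c_i r_i$, together with the identities $p_{i,j}p_{j,i}=q_iq_j$ and $p_{n,l}p_{k,n}=p_{k,l}q_n$, compresses case (c) to a two-line cancellation and, for case (e), identifies the diagonal block as $(q_i-q_j)(q_j-q_k)(q_k-q_i)$ and the mixed block as its negative --- a case the paper explicitly declines to write out. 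Cases (a) and (b) are handled identically in both arguments (they are the invariants of $\mathcal{M}_2$ from Proposition~\ref{appl_DS}, and the common-diagonal mixture model is a subset of $\mathcal{M}_2$).

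The one genuine gap is case (d): you defer it with ``I would organize the verification by collecting the coefficients of $\delta^2$, $\delta^1$ and $\delta^0$,'' but you never carry this out, and it is precisely here that something nontrivial happens. If you apply your method to $g_{ijk}$ exactly as printed in the theorem, the computation \emph{fails}: after factoring out $p_{i,j}=\alpha r_i c_j$ from every term, the coefficient of $\delta^2$ is $1+1-1+1=2\neq 0$, coming from the four terms $p_{i,j}p_{i,i}p_{k,k}+p_{i,j}p_{j,j}p_{k,k}-p_{i,j}p_{i,i}p_{j,j}+p_{i,j}p_{k,k}p_{k,k}$. The sign of the last of these must be negative; indeed the paper's own proof of case (d) works with $-p_{i,j}p_{k,k}^2$, so the $+$ in the displayed statement of $g_{ijk}$ is a typo. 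With that sign corrected, your method goes through cleanly: the $\delta^2$ coefficient is $1+1-1-1=0$, the $\delta^1$ coefficient telescopes to $0$, and the $\delta^0$ part is $q_iq_k+q_jq_k-q_iq_j-q_k^2+q_k^2-q_iq_k+q_iq_j-q_jq_k=0$. So your plan is sound, but as written you have not proved (d), and had you executed the plan on the stated polynomial you would have needed to detect and repair the sign before concluding.
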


\begin{proof}
Cases (a) and (b) follow from Proposition \ref{appl_DS} since the
off-diagonal elements of the probability table are described, up
to scalar, in the same monomial form as for the elements of
${\mathcal{M}}_1$.

For case (c), consider the term $g_1=p_{i,j}p_{k,l}p_{n,n}$ in
$f_{ijklmn}$. This gives two monomials: $\alpha^3
r_ic_jr_kc_lr_nc_n$ and $\alpha^2 r_ic_jr_kc_l(1-\alpha)d$, where
$d=1/I$. The term $-g_2=-p_{i,j}p_{n,l}p_{k,n}$ of $f_{ijklmn}$
cancels the first monomial of $g_1$. In fact
$-p_{i,j}p_{n,l}p_{k,n}=\alpha^3 r_ic_jr_nc_lr_kc_n$. Since in
$g_2$ there are not diagonal variables, we need another term in
order to cancel the second monomial of $g_1$. Thus we subtract, to
$g_1-g_2$, a term of the form $g_3=p_{i,j}p_{k,l}p_{m,m}$ which
gives the monomials $-\alpha^2 r_ic_jr_kc_l (1-\alpha)d$ and
$-\alpha^3 r_ic_jr_kc_lr_mc_m$. To cancel this last monomial it is
enough to add the term
$g_4=p_{k,l}p_{m,j}p_{i,m}=\alpha^3r_kc_lr_mc_jr_ic_m$. Thus
$f_{ijklmn}=g_1-g_2-g_3+g_4$ vanishes on the entries of a
probability table of the mixture model with common diagonal
effect.

For case (d), consider first the terms with pairs of variables on
the diagonal.
\begin{equation*}
\begin{split}
p_{i,j}p_{i,i}p_{k,k}=& \alpha^3r_i^2r_kc_ic_jc_k+\alpha^2r_i^2c_ic_jd-\alpha^3r_i^2c_ic_jd+\boxed{\alpha^2r_ir_kc_jc_kd}+\\
& +\alpha
r_ic_jd^2-2\alpha^2r_ic_jd^2-\alpha^3r_ir_kc_jc_kd+\alpha^3
r_ic_jd^2  \, ;
\end{split}
\end{equation*}

\begin{equation*}
\begin{split}
p_{i,j}p_{j,j}p_{k,k}=& \alpha^3r_ir_jr_kc_j^2c_k+\alpha^2r_ir_jc_j^2d-\alpha^3r_ir_jc_j^2d+\alpha^2r_ir_kc_jc_kd+\\
& +\alpha
r_ic_jd^2-2\alpha^2r_ic_jd^2-\boxed{\alpha^3r_ir_kc_jc_kd}+\alpha^3
r_ic_jd^2  \, ;
\end{split}
\end{equation*}

\begin{equation*}
\begin{split}
p_{i,j}p_{i,i}p_{j,j}=& \alpha^3r_i^2r_jc_ic_j^2+\alpha^2r_i^2c_ic_jd-\alpha^3r_i^2c_ic_jd+\alpha^2r_ir_jc_j^2d+\\
& +\alpha
r_ic_jd^2-2\alpha^2r_ic_jd^2-\alpha^3r_ir_jc_j^2d+\alpha^3
r_ic_jd^2 \, ;
\end{split}
\end{equation*}

\begin{equation*}
\begin{split}
p_{i,j}p_{k,k}^2=& \alpha^3r_ir_k^2c_jc_k^2+2\alpha^2r_ir_kc_jc_kd-\boxed{2\alpha^3r_ir_kc_jc_kd}+\alpha r_ic_jd^2+\\
&-2\alpha^2r_ic_jd^2+\alpha^3 r_ic_jd^2 \, .
\end{split}
\end{equation*}

It is easy to see that while some terms, such as $\alpha^3
r_ic_jd^2$, are simply cancelled considering the difference of two
monomials, other terms, such as the boxed ones, appear in
different monomials. However, they appear with the appropriate
coefficients and considering
$p_{i,j}p_{i,i}p_{k,k}+p_{i,j}p_{j,j}p_{k,k}-p_{i,j}p_{i,i}p_{j,j}-p_{i,j}p_{k,k}^2$
we cancel most of them. In fact we obtain
\begin{equation*}
\alpha^3r_i^2r_kc_ic_jc_k-\alpha^3r_ir_k^2c_jc_k^2-\alpha^3r_i^2r_jc_j^2c_i+\alpha^3r_ir_jr_kc_j^2c_k
\, .
\end{equation*}

The only way to cancel the term $-\alpha^3r_ir_k^2c_jc_k^2$ is to
add the monomial
$p_{i,k}p_{k,j}p_{k,k}=\alpha^3r_ir_k^2c_jc_k^2+\alpha^2r_ir_kc_jc_kd-\alpha^3
r_ir_kc_jc_k d$. However this monomial adds two more terms that
can be cancelled by using another monomial with a variable in the
diagonal, that is
$p_{i,i}p_{i,k}p_{k,j}=\alpha^3r_i^2r_kc_jc_jc_k+\alpha^2r_ir_kc_jc_kd-\alpha^3
r_ir_kc_jc_k d$. After that, the only two missing terms are
$-\alpha^3r_i^2r_jc_j^2c_i+\alpha^3r_ir_jr_kc_j^2c_k$ which can be
cancelled by adding $p_{i,j}^2p_{j,i}-p_{i,j}p_{k,j}p_{j,k}$.

For the case (e), we omit the complete details of the proof. One
has to proceed as in cases (c) and (d) considering separately
$p_{i,i}p_{j,j}^2+p_{i,i}^2p_{k,k}+p_{j,j}p_{k,k}^2-p_{i,i}^2p_{j,j}-p_{j,j}^2p_{k,k}-p_{i,i}p_{k,k}^2$
and the contributions of
$p_{i,i}p_{i,j}p_{j,i}-p_{i,i}p_{i,k}p_{k,i}$,
$p_{j,j}p_{j,k}p_{k,j}-p_{j,j}p_{j,i}p_{i,j}$ and
$p_{k,k}p_{k,i}p_{i,k}-p_{k,k}p_{k,j}p_{j,k}$.
\end{proof}

With some computations with CoCoA, we have found that the
polynomials defined in Theorem \ref{inv-common} define the model
${\mathcal M}_2$ for $I=3,4,5$. We conjecture that this fact is
true in general.

\bibliographystyle{decsci}
\bibliography{tuttopm}

\begin{thebibliography}{24}
\expandafter\ifx\csname natexlab\endcsname\relax\def\natexlab#1{#1}\fi
\expandafter\ifx\csname url\endcsname\relax
  \def\url#1{{\tt #1}}\fi
\expandafter\ifx\csname urlprefix\endcsname\relax\def\urlprefix{URL }\fi

\bibitem[{Agresti(1992)}]{agresti:92}
Agresti, A. (1992).
\newblock Modelling patterns of agreement and disagreement.
\newblock {\em Stat. Methods Med. Res.\/}, {\em 1\/}, 201--218.

\bibitem[{Agresti(2002)}]{agresti:02}
Agresti, A. (2002).
\newblock {\em Categorical Data Analysis\/}.
\newblock New York: Wiley, 2 ed.

\bibitem[{Aoki and Takemura(2005)}]{aoki|takemura:05b}
Aoki, S. and Takemura, A. (2005).
\newblock {M}arkov chain {M}onte {C}arlo exact tests for incomplete two-way
  contingency tables.
\newblock {\em J. Stat. Comput. Simul.\/}, {\em 75\/}(10), 787--812.

\bibitem[{Carlini and Rapallo(2009)}]{carlini|rapallo:08}
Carlini, E. and Rapallo, F. (2009).
\newblock Algebraic modelling of category distinguishability.
\newblock In P.~Gibilisco, E.~Riccomagno, M.~P. Rogantin, and H.~P. Wynn
  (Eds.), {\em Mathematics Explorations in Contemporary Statistics\/},
  Cambridge University Press.
\newblock In press.

\bibitem[{Chen {\em et~al.\/}(2005)Chen, Dinwoodie, Dobra, and
  Huber}]{chen|dinwoodie|dobra|huber:05}
Chen, Y., Dinwoodie, I., Dobra, A., and Huber, M. (2005).
\newblock Lattice points, contingency tables, and sampling.
\newblock In {\em Integer points in polyhedra---geometry, number theory,
  algebra, optimization\/}, Providence, RI: Amer. Math. Soc., vol. 374 of {\em
  Contemp. Math.\/}. 65--78.

\bibitem[{Chen {\em et~al.\/}(2006)Chen, Dinwoodie, and
  Sullivant}]{chen|dinwoodie|sullivant:06}
Chen, Y., Dinwoodie, I., and Sullivant, S. (2006).
\newblock Sequential importance sampling for multiway tables.
\newblock {\em Ann. Statist.\/}, {\em 34\/}(1), 523–--545.

\bibitem[{{CoCoA}Team(2007)}]{cocoa}
{CoCoA}Team (2007).
\newblock {C}o{C}o{A}: a system for doing {C}omputations in {C}ommutative
  {A}lgebra.
\newblock Available at \/ {\tt http://cocoa.dima.unige.it}.

\bibitem[{Diaconis and Sturmfels(1998)}]{diaconis|sturmfels:98}
Diaconis, P. and Sturmfels, B. (1998).
\newblock Algebraic algorithms for sampling from conditional distributions.
\newblock {\em Ann. Statist.\/}, {\em 26\/}(1), 363--397.

\bibitem[{Garcia {\em et~al.\/}(2005)Garcia, Stillman, and
  Sturmfels}]{garcia|stillman|sturmfels:05}
Garcia, L.~D., Stillman, M., and Sturmfels, B. (2005).
\newblock Algebraic geometry of {B}ayesyan networks.
\newblock {\em J. Symb. Comput.\/}, {\em 39\/}, 331--355.

\bibitem[{Geiger {\em et~al.\/}(2006)Geiger, Meek, and
  Sturmfels}]{geiger|meek|sturmfels:06}
Geiger, D., Meek, C., and Sturmfels, B. (2006).
\newblock On the toric algebra of graphical models.
\newblock {\em Ann. Statist.\/}, {\em 34\/}(3), 1463--1492.

\bibitem[{Hara {\em et~al.\/}(2008)Hara, Takemura, and
  Yoshida}]{hara|takemura|yoshida:08}
Hara, H., Takemura, A., and Yoshida, R. (2008).
\newblock A {M}arkov basis for conditional test of common diagonal effect in
  quasi-independence model for square contingency tables.
\newblock arxiv:0802.2603.

\bibitem[{Harris(1992)}]{harris:92}
Harris, J. (1992).
\newblock {\em Algebraic geometry. {A} first course\/}, vol. 133 of {\em
  Graduate Texts in Mathematics\/}.
\newblock New York: Springer-Verlag.

\bibitem[{Hartshorne(1977)}]{hartshorne:77}
Hartshorne, R. (1977).
\newblock {\em Algebraic geometry\/}, vol.~52 of {\em Graduate Texts in
  Mathematics\/}.
\newblock New York: Springer-Verlag.

\bibitem[{Hosten and Sullivant(2004)}]{hosten|sullivant:04}
Hosten, S. and Sullivant, S. (2004).
\newblock Ideals of adjacent minors.
\newblock {\em J. Algebra\/}, {\em 277\/}, 615--642.

\bibitem[{Krampe and Kuhnt(2007)}]{krampe|kuhnt:07}
Krampe, A. and Kuhnt, S. (2007).
\newblock {B}owker's test for symmetry and modifications within the algebraic
  framework.
\newblock {\em Comput. Statist. Data Anal.\/}, {\em 51\/}, 4124--4142.

\bibitem[{Kreuzer and Robbiano(2000)}]{kreuzer|robbiano:00}
Kreuzer, M. and Robbiano, L. (2000).
\newblock {\em Computational Commutative Algebra 1\/}.
\newblock Berlin: Springer.

\bibitem[{Pachter and Sturmfels(2005)}]{pachter|sturmfels:05}
Pachter, L. and Sturmfels, B. (2005).
\newblock {\em Algebraic statistics for computational biology\/}.
\newblock New York: Cambridge University Press.

\bibitem[{Pistone {\em et~al.\/}(2001{\natexlab{a}})Pistone, Riccomagno, and
  Wynn}]{pistone|riccomagno|wynn:01}
Pistone, G., Riccomagno, E., and Wynn, H.~P. (2001{\natexlab{a}}).
\newblock {\em Algebraic Statistics: Computational Commutative Algebra in
  Statistics\/}.
\newblock Boca Raton: Chapman\&Hall/CRC.

\bibitem[{Pistone {\em et~al.\/}(2001{\natexlab{b}})Pistone, Riccomagno, and
  Wynn}]{pistone|riccomagno|wynn:01ams}
Pistone, G., Riccomagno, E., and Wynn, H.~P. (2001{\natexlab{b}}).
\newblock Computational commutative algebra in discrete statistics.
\newblock In M.~A.~G. Viana and D.~S.~P. Richards (Eds.), {\em Algebraic
  Methods in Statistics and Probability\/}, American Mathematical Society, vol.
  287 of {\em Contemporary Mathematics\/}. 267--282.

\bibitem[{Rapallo(2003)}]{rapallo:03}
Rapallo, F. (2003).
\newblock Algebraic {M}arkov bases and {MCMC} for two-way contingency tables.
\newblock {\em Scand. J. Statist.\/}, {\em 30\/}(2), 385--397.

\bibitem[{Rapallo(2005)}]{rapallo:05}
Rapallo, F. (2005).
\newblock Algebraic exact inference for rater agreement models.
\newblock {\em Stat. Methods Appl.\/}, {\em 14\/}(1), 45--66.

\bibitem[{Rapallo(2007)}]{rapallo:07}
Rapallo, F. (2007).
\newblock Toric statistical models: Parametric and binomial representations.
\newblock {\em Ann. Inst. Statist. Math.\/}, {\em 59\/}(4), 727--740.

\bibitem[{Riccomagno(2009)}]{riccomagno:09}
Riccomagno, E. (2009).
\newblock A short history of algebraic statistics.
\newblock {\em Metrika\/}, {\em 69\/}, 397--418.

\bibitem[{Schuster(2002)}]{schuster:02}
Schuster, C. (2002).
\newblock A mixture model approach to indexing rater agreement.
\newblock {\em British J. Math. Statist. Psych.\/}, {\em 55\/}(2), 289--303.

\end{thebibliography}

\end{document}